\newcounter{RomanNumber}
\newcommand{\MyRoman}[1]{\setcounter{RomanNumber}{#1}\Roman{RomanNumber}}
 \newtheorem{thm}{Theorem}[section]
 \newtheorem{cor}[thm]{Corollary}
 \newtheorem{lem}[thm]{Lemma}
 \theoremstyle{definition}
 \newtheorem{rem}[thm]{Remark}
 \numberwithin{equation}{section}
\theoremstyle{definition}
\theoremstyle{remark}
\begin{document}
\title{On Scaling Invariance and Type-\MyRoman{1} Singularities for the Compressible Navier-Stokes Equations}
\author{
Zhen Lei
\footnote{School of Mathematical Sciences; LMNS and Shanghai
 Key Laboratory for Contemporary Applied Mathematics, Fudan University, Shanghai 200433, P. R.China. {\it Email:
 leizhn@gmail.com}}\and Zhouping Xin\footnote{The Institute of Mathematical Sciences and Department of Mathematics,
The Chinese University of Hong Kong, Shatin, N.T., Hong Kong.
{\it Email: zpxin@ims.cuhk.edu.hk}
}
 }
\date{\today}
\maketitle

\begin{abstract}
We find a new scaling invariance of the barotropic compressible Navier-Stokes equations. Then it is shown that type \MyRoman{1} singularities of solutions with
$$\limsup_{t \nearrow T}|{\rm div} u(t, x)|(T - t) \leq \kappa,$$
can never happen at time $T$ for all adiabatic number $\gamma \geq 1$. Here $\kappa > 0$ doesn't depend on the initial data. This is achieved by proving the regularity of solutions under
$$\rho(t, x) \leq \frac{M}{(T - t)^\kappa},\quad M < \infty.$$
This new scaling invariance also motivates us to construct an explicit type \MyRoman{2} blowup solution for $\gamma > 1$.
\end{abstract}

% With AMS-LaTeX, \maketitle follows the abstract
\maketitle

%%      ---------------------------------------------------------------------
%%      ------------------- TABLE OF CONTENTS (OPTIONAL) --------------------
%%      ---------------------------------------------------------------------

% \tableofcontents

%%      ---------------------------------------------------------------------
%%      ---------------------------- BODY OF PAPER --------------------------
%%      ---------------------------------------------------------------------

%%      Please input or insert the body of your paper here.

\section{Introduction}

We study the Cauchy problem for the compressible
Navier-Stokes equations
\begin{equation}\label{CNS}
\begin{cases}
\partial_t\rho + \nabla\cdot (\rho u) = 0,\\[-4mm]\\
\partial_t(\rho u) + \nabla\cdot(\rho u\otimes u) + \nabla P(\rho)
  = \mu\Delta u + (\lambda + \mu)\nabla\nabla\cdot u,\\[-4mm]\\
P(\rho) = A\rho^\gamma,
\end{cases}
\end{equation}
 which govern the motion of a compressible viscous and polytropic
Newtonian fluid.
As usual, $\rho: \mathbb{R}_+ \times \Omega \rightarrow
\mathbb{R}_+$ denotes the density of the fluid flows, $u: \mathbb{R}_+ \times
\Omega \rightarrow \mathbb{R}^n$ is the velocity field and $P(\rho) = A\rho^\gamma$ the
scalar pressure. Moreover, $\Omega \subseteq \mathbb{R}^n$ is a smooth bounded domain or the whole space with $n$ ($= 2\ {\rm or}\ 3$) being the space dimension, the constant $A > 0$, the adiabatic number $\gamma \geq 1$
and the viscosity constants $\lambda$ and $\mu$ satisfy the physical constraint
\begin{equation}\label{phycons}
\mu > 0,\ \ n\lambda + 2\mu > 0.
\end{equation}
If $\lambda = \mu = 0$ in \eqref{CNS}, one recovers the compressible Euler equations.

The main results of this paper are that there is a new scaling law for \eqref{CNS} and certain type \MyRoman{1}  singularities can be excluded.
The new scaling invariance of the compressible Navier-Stokes equations is stated as a theorem below which is our key observation. Here we do not pursue the exact meaning of "solution".
\begin{thm}\label{ScalingI}
Let $(\rho, u)$ be a solution to the compressible Navier-Stokes equations \eqref{CNS} and $\kappa > 0$. Define
\begin{equation}\label{Scaling}
\begin{cases}
\rho^\kappa(t, x) = \kappa^{\frac{1}{\gamma}}\rho(\kappa t, \kappa^{\frac{\gamma + 1}{2\gamma}}x),\\[-4mm]\\
u^\kappa(t, x) = \kappa^{\frac{\gamma - 1}{2\gamma}}u(\kappa t, \kappa^{\frac{\gamma + 1}{2\gamma}}x).
\end{cases}
\end{equation}
Then $(\rho^\kappa, u^\kappa)$ is also a solution to \eqref{CNS} for every $\kappa > 0$.
\end{thm}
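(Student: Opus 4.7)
The plan is to verify the claim by a direct change of variables. Set $\tau=\kappa t$ and $y=\kappa^{(\gamma+1)/(2\gamma)}x$, so that by the chain rule each time derivative of the composition produces a factor $\kappa$ and each spatial derivative a factor $\kappa^{(\gamma+1)/(2\gamma)}$. Writing $\alpha=1/\gamma$, $\beta=(\gamma-1)/(2\gamma)$, and $\sigma=(\gamma+1)/(2\gamma)$ for brevity, the substitution \eqref{Scaling} transforms each term of \eqref{CNS} into the corresponding term evaluated at $(\tau,y)$ multiplied by an explicit power of $\kappa$. The whole argument reduces to checking that, in each equation, every term carries the same power of $\kappa$, so that this common factor may be divided out and the original PDE at $(\tau,y)$ is recovered.

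For the continuity equation, $\partial_t\rho^\kappa$ picks up $\kappa^{\alpha+1}$ while $\nabla\cdot(\rho^\kappa u^\kappa)$ picks up $\kappa^{\alpha+\beta+\sigma}$; equality of exponents amounts to the identity $\beta+\sigma=1$, which is immediate. For the momentum equation I would compare four groups of terms: $\partial_t(\rho^\kappa u^\kappa)$ carries $\kappa^{\alpha+\beta+1}$, the convective term $\nabla\cdot(\rho^\kappa u^\kappa\otimes u^\kappa)$ carries $\kappa^{\alpha+2\beta+\sigma}$, the pressure gradient $\nabla P(\rho^\kappa)=A\nabla((\rho^\kappa)^\gamma)$ carries $\kappa^{\alpha\gamma+\sigma}$, and the viscous terms $\mu\Delta u^\kappa+(\lambda+\mu)\nabla\nabla\cdot u^\kappa$ carry $\kappa^{\beta+2\sigma}$. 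Equality of all four exponents, after using $\beta+\sigma=1$, reduces to the two relations $2\beta=\alpha(\gamma-1)$ and $\alpha+1=2\sigma$, both of which hold identically for the stated values of $\alpha$, $\beta$, $\sigma$.

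There is no genuine obstacle here; the content of the theorem lies in discovering the exponents $\alpha,\beta,\sigma$ rather than in verifying them. What constrains them is the nonlinear pressure law $P(\rho)=A\rho^\gamma$: the requirement that $\nabla P(\rho^\kappa)$ scale like the inertial and viscous terms forces the $\gamma$-dependence and, in particular, the unusual spatial rescaling $\kappa^{(\gamma+1)/(2\gamma)}$ that differs from the classical parabolic $\kappa^{1/2}$ scaling used for the incompressible Navier-Stokes equations. Once these exponents are in hand, the proof is a bookkeeping exercise matching powers of $\kappa$ in each of the two equations.
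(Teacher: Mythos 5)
Your proof is correct, and it carries out exactly the direct verification that the paper alludes to but does not write out (the authors simply remark that the invariance "can be verified directly"). The exponent bookkeeping checks: $\beta+\sigma=1$ handles both the continuity equation and the convective-vs.-time-derivative comparison, $\alpha+1=2\sigma$ matches the viscous terms, and $\alpha\gamma+\sigma=\alpha+\beta+1$ matches the pressure term, all identities for $\alpha=1/\gamma$, $\beta=(\gamma-1)/(2\gamma)$, $\sigma=(\gamma+1)/(2\gamma)$.
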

While such a scaling invariance can be verified directly, we believe that it is interesting in itself. As is well-known, the natural scaling invariance has played one of the most essential roles in the study of incompressible Navier-Stokes equations at least from a heuristical point of view (for instance, the famous Caffarelli-Kohn-Nirenberg theory \cite{CKN:1}, the small data global existence type results in critical spaces \cite{Chemin, FK, Kato, Planchon, KTa, LL}, etc.). Certain type of asymptotic scalings are also one of the backbones in the study of compressible Navier-Stokes equations (for instance, the work of Dachin \cite{Danchin} and so on).  The formula \eqref{Scaling} gives the first precise scaling laws for the barotropic compressible Navier-Stokes equations. Based on the scaling, one may formally assign the dimensions of space-time variables and unknowns as follows:
Each space variable $x_j$ has dimension $+ 1$, the time variable $t$  has dimension $+ \frac{2\gamma}{\gamma + 1}$, the density function $\rho$ has dimension $- \frac{2}{\gamma + 1}$
and the velocity vector $u$ has dimension $- \frac{\gamma - 1}{\gamma + 1}$.

 At a heuristical level, the scaling invariance given in \eqref{Scaling} suggests that local strong solutions with certain blowup rate will stop the formation of true singularities. It also suggests us a way to construct finite time singular solutions with special forms. We will confirm these intuitive ideas and we hope it could be useful elsewhere.  We would like to mention that for type \MyRoman{1} solutions of the Navier-Stokes equations in the incompressible case, a significant progress has been achieved recently for the axi-symmetric case \cite{CSTY, KNSS, CSYT, LZ}.

For the compressible Navier-Stokes equations \eqref{CNS}, there have been extensive literatures on the existence and finite time singularity of solutions. In particular, in the absence of vacuum, a uniqueness result was obtained
by Serrin \cite{Serrin} and the local existence results were given by Nash \cite{Nash} and Itaya \cite{Itaya}. Later on, non-vacuum small perturbations of a uniform non-vacuum constant state have been shown to exist globally in time and remain smooth in any
space dimensions \cite{MN, MN2, Hoff1, Hoff2, Hoff3, Danchin, CMZ}. Those works are based on the dissipative nature of the system. In the presence of vacuum, the system is strongly degenerate and the problem becomes extremely completed. A breakthrough was made by Lions \cite{Lions} where global existence of weak solutions with finite energy was established for adiabatic number $\gamma > \frac{9}{5}$ (see also Feireisl, Novotn${\rm \acute{y}}$ and Petzeltov${\rm \acute{a}}$ \cite{Fei2} for the case of $\gamma > \frac{3}{2}$, and Jiang and Zhang \cite{JZ1, JZ2} for the case of $\gamma > 1$ under symmetry). In recent years, there are tremendous works on the existence and uniqueness of classical solutions in the presence of vacuum, see for instance, \cite{Fei1, Des, Solo, CCK-1, CCK-2, HLX, Luo} and the references therein. In particular, when the initial total energy is sufficiently small, the well-posedness holds globally in time \cite{HLX}. For the formation of singularities,  Xin \cite{Xin} showed that in the case that the initial density has a compact support, any smooth solution with $u \in C^1([0, T] : H^s(\mathbb{R}^n))(s > [n/2]+2)$ to the Cauchy problem of the
full compressible Navier-Stokes system without heat conduction blows up in finite time
for any space dimension $n \geq 1$ (see also \cite{XY, HXin} for further results). Li, Wang and Xin \cite{LWX} even proved that the classical solution with finite energy does not exist in the inhomogeneous Sobolev space for any short time under some natural assumptions on initial data near the vacuum.
For the compressible Euler Equations, finite time singularities of solutions have been shown in \cite{Sideris} in the absence of vacuum and \cite{LDZ} in the presence of vacuum. There are also many works on blowup criterions on the compressible Navier-Stokes equations, for instance, see \cite{FJO, JW, HLX2, HZ1, HZ2, SWZ, WZ} and the references therein. Of all those works, \cite{Hoff2, HLX2, SWZ, CCK-2, WZ} are the most relevant for our work and will be discussed in detail below.

We first impose the boundary condition when $\Omega$ is a bounded  domain with a smooth boundary
\begin{equation}\nonumber
u = 0\quad {\rm on}\ \partial\Omega.
\end{equation}
We assume that the initial data satisfy
\begin{equation}\label{data}
\begin{cases}
\rho(0, \cdot) = \rho_0(\cdot) \in L^1 \cap H^s,\\[-4mm]\\
u_0 \in H_0^1\cap H^s,
\end{cases}
s \geq 3.
\end{equation}
It is clear that \eqref{data} implies $\rho_0(\cdot) \in L^\infty$ and $\|\sqrt{\rho_0}u_0\|_{L^2} < \infty$.
Here and in what follows we use $H^s$ and $W^{s, q}$ to denote the usual Sobolev space. For $f \in H_0^1$,  if $\Omega$ is a smooth bounded domain, we meant that $f = 0$ on the boundary in the sense of trace. Due to \cite{CCK-2}, there exists a unique local strong solution to the compressible Navier-Stokes equations with initial data \eqref{data} under certain compatibility conditions on the initial data. Moreover, the local strong solution $(\rho, u)$ satisfies
\begin{equation}\label{solu-1}
\begin{cases}
\rho \in L^1\cap C([0, T), W^{1, q}),\quad u \in C([0, T), L^{6}),\quad u(t, \cdot)|_{\partial\Omega} = 0,\\[-4mm]\\
\nabla u \in C([0, T),  H^1),\quad \int_0^t\|\nabla^2u\|_{L^q}^2ds < \infty\ (\forall\ t < T),
\end{cases}
3 < q < 6,
\end{equation}
and the solution is regular at time $T$ if
\begin{equation}\label{criterion}
\sup_{0 \leq t < T}(\|\nabla \rho\|_{L^q} + \|\nabla u\|_{L^2}) < \infty.
 \end{equation}
 We remark that throughout this paper, we say the solution $(\rho, u)$ is regularity at time $T$ if the continuity in time holds at $t = T$ in \eqref{solu-1}.

For the local strong solution in \eqref{solu-1}, Huang and Xin \cite{HZ2} and Huang, Li and Xin \cite{HZ1} proved the regularity at time $T$ if
$$\lim_{t \nearrow T}\int_0^t\|\nabla u + (\nabla u)^T\|_{L^\infty}ds < \infty.$$
Under extra constraint $\lambda < 7\mu$, Sun, Wang and Zhang \cite{SWZ} (under $\rho_0 > 0$) and independently, Huang, Li and Xin \cite{HLX2} proved the regularity at time $T$ if
$$\sup_{0 \leq t < T}\|\rho\|_{L^\infty} < \infty.$$ Wen and Zhu \cite{WZ} weakened the constraint to be $\lambda < \frac{29\mu}{7}$ and improved the result in terms of $\|\rho\|_{L^q}$ for a sufficiently large $q$. We will use some ideas and calculations in \cite{Hoff2, HLX2, HZ2, SWZ, WZ} and follow the framework of \cite{HLX2, HZ2, SWZ}.

The second result of this paper is the following theorem which asserts that the solution in \eqref{solu-1} is smooth at time $T$ if it is type \MyRoman{1}. Due to the strong degeneracy of the parabolic nature of the system, the index $\kappa$ is not \textit{optimal} in our theorems.  Based on the above scaling invariance \eqref{Scaling}, we conjecture that the optimal index might be $\frac{1}{\gamma}$, which seems a fantastic challenge to us.

\begin{thm}[No type \MyRoman{1} Singularities]\label{TypeI}
Let $\gamma \geq 1$, $n = 3$, $A > 0$, $T > 0$, $K \leq 7$, $\lambda < K\mu$ be constants and satisfy the physical constraint \eqref{phycons}. Let $p \in (3, 6)$ be determined in Lemma \ref{elementarylem} and $\kappa$ satisfy
\begin{equation}\label{Constraint}
\kappa < \min\big\{\frac{1}{\gamma + 3},\quad \frac{1}{3\gamma},\quad \frac{p - 3}{p + 1}\big\}.
\end{equation}
Then $(\rho, u)$ in \eqref{solu-1} is regular at time $T$ provided that
\begin{equation}\label{UPB-1}
|\nabla\cdot u(t, x)| \leq \frac{\kappa}{T - t},\quad {\rm as}\ t \nearrow T.
\end{equation}
If $K = 2$, $p$ can be taken to be 4.
\end{thm}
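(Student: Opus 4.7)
The plan is to reduce Theorem \ref{TypeI} to the regularity criterion \eqref{criterion} in two stages: first convert the pointwise divergence hypothesis \eqref{UPB-1} into the time-pointwise density bound
\[
\rho(t,x) \le \frac{M}{(T-t)^{\kappa}},
\]
and then close a version of the a priori estimates developed in \cite{Hoff2, HLX2, HZ2, SWZ, WZ} in which the density is allowed to blow up at this polynomial rate.

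For the first stage I would propagate $\rho$ along the Lagrangian trajectories $\dot{X}(t;x_{0}) = u(t,X)$. The continuity equation yields
\[
\frac{d}{dt}\log\rho(t,X) = -\nabla\cdot u(t,X),
\]
and the hypothesis $|\nabla\cdot u|\le\kappa/(T-t)$ integrates to
\[
\rho(t,X) \le \rho_{0}(x_{0})\Bigl(\frac{T}{T-t}\Bigr)^{\kappa},
\]
so $\rho(t,x)\le M(T-t)^{-\kappa}$ with $M=T^{\kappa}\|\rho_{0}\|_{L^{\infty}}$; the flow is well defined because \eqref{solu-1} gives $\nabla u$ continuous in $x$. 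This is the precise pointwise bound that the abstract promises.

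For the second stage the goal is
\[
\sup_{0\le t<T}\bigl(\|\nabla u\|_{L^{2}} + \|\nabla\rho\|_{L^{q}}\bigr) < \infty
\]
under the density bound above, after which \eqref{criterion} finishes the argument. Starting from the basic energy identity (which already gives $\|\sqrt{\rho}\,u\|_{L^{2}}$ and $\int_{0}^{T}\|\nabla u\|_{L^{2}}^{2}\,dt$), I would follow the material-derivative scheme, testing the momentum equation against $\dot{u} = u_{t}+u\cdot\nabla u$ and separating pressure contributions through the effective viscous flux $F=(2\mu+\lambda)\nabla\cdot u - A\rho^{\gamma}$. Each step produces source terms with pressure factors $\rho^{\gamma}$ which, by the density bound, give time integrals of the form $\int_{0}^{T}(T-t)^{-\alpha\kappa}\,dt$, and the three inequalities in \eqref{Constraint} are exactly what is needed to keep them finite: the $L^{3\gamma}$ control of $P(\rho)$ in the momentum equation forces $\kappa<1/(3\gamma)$, the $H^{1}$-level estimate for $\nabla u$ forces $\kappa<1/(\gamma+3)$, and the $L^{p}$ estimate for $\nabla\rho$ via the elliptic bound in Lemma \ref{elementarylem} forces $\kappa<(p-3)/(p+1)$. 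The hypothesis $\lambda<K\mu$ enters as in \cite{SWZ,HLX2,WZ} through the control of $F$, fixing the admissible range of $p$ and permitting $p=4$ when $K=2$.

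The main obstacle will be the bookkeeping of how the factor $(T-t)^{-\kappa}$ propagates through the Hoff-type $L^{2}$ estimate for $\nabla u$ and the $W^{1,q}$ estimate for $\rho$. The existing arguments \cite{HLX2, SWZ, WZ} crucially use $\|\rho\|_{L^{\infty}}<\infty$ to keep the effective viscous flux harmless; here the density may diverge, and one must interpolate between the $L^{1}\cap L^{\infty}$ envelope of $\rho$ and the energy-type norms so that every Gr\"onwall source carries a time-integrable singularity. Arranging the interpolation so that only the three exponents in \eqref{Constraint} appear, and no worse ones, is the delicate step that I expect to absorb most of the work.
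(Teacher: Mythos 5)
Your Stage~1 is precisely the paper's proof of Theorem~\ref{TypeI}: one integrates the continuity equation along characteristics using \eqref{UPB-1} to obtain $\rho(t,x)\le \|\rho_{0}\|_{L^{\infty}}T^{\kappa}(T-t)^{-\kappa}$, and then invokes the density-bound regularity criterion (Theorem~\ref{main}). Your Stage~2 correctly anticipates the structure of that criterion's proof (energy estimates, effective-viscous-flux decomposition, interpolation against the $(T-t)^{-\kappa}$ envelope, and the three exponent constraints in \eqref{Constraint}), so the overall approach is essentially the same as the paper's; the only cosmetic difference is that you phrase the effective viscous flux as the scalar $F=(2\mu+\lambda)\nabla\cdot u - A\rho^{\gamma}$ while the paper works with the vector field $\omega=u-v$, $Lv=A\nabla\rho^{\gamma}$, which plays the equivalent role.
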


The proof of Theorem \ref{TypeI} will be presented in Section 2 by assuming the validity of Theorem \ref{main}.
\begin{rem}
The constant $p$ determined in Lemma \ref{elementarylem} depends only on the physical viscosity constants $\lambda$ and $\mu$, but not on solutions and other constants. With extra complicated calculations, one may improve $K \leq 7$ as $K \leq \frac{29}{7}$ (see Wen and Zhu \cite{WZ} for that purpose).
\end{rem}

\begin{rem}
For $\gamma > 1$, it is clear that the following energy law holds for the solutions in Theorem \ref{TypeI}
\begin{eqnarray}\label{basicen}
&&\frac{1}{2}\int \big(\rho|u|^2 + \frac{A}{\gamma - 1}\rho^\gamma\big)dx + \int_0^t\int\big(\mu|\nabla u|^2 + (\lambda + \mu)|\nabla\cdot u|^2\big)dx\\\nonumber
 &&=  \frac{1}{2}\int \big(\rho_0|u_0|^2 + \frac{A}{\gamma - 1}\rho_0^\gamma\big)dx,
\end{eqnarray}
which will be frequently used throughout this paper. For instance, we often use the following
$$\sqrt{\rho}u \in L^\infty(L^2_x),\quad \rho \in L^\infty(L^\gamma_x),\quad \nabla u \in L^2(L^2_x).$$
In the whole space case, vacuum might be very common since $\rho \in L^\gamma$. In the bounded domain case, vacuum is also allowed to appear.
\end{rem}

\begin{rem}
When $\gamma = 1$, which corresponds to the isothermal process, instead of using the basic energy law \eqref{basicen}, we will use
the conservation of mass $$\int \rho(t, x)dx = \int \rho_0(x)dx$$ and the following alternative
\begin{eqnarray}\label{basicen-1}
&&\frac{1}{2}\int\rho|u|^2 dx + \mu\int_0^t\|\nabla u\|_{L^2}^2ds + \frac{\lambda + \mu}{2}\int_0^t\|\nabla\cdot u\|_{L^2}^2ds\\\nonumber
&&= \frac{1}{2}\int\rho_0|u_0|^2 dx +  A\int_0^t\int \rho\nabla\cdot u dxds - \frac{\lambda + \mu}{2}\int_0^t\|\nabla\cdot u\|_{L^2}^2ds\\\nonumber
&&\leq \frac{A^2\|\rho\|_{L^1}}{2(\lambda + \mu)}\int_0^t\|\rho\|_{L^\infty}ds.
\end{eqnarray}
Under the constraint on $\rho$ in Theorem \ref{main}, the right hand side of \eqref{basicen-1} is uniformly bounded on $t \in [0, T)$. Hence the case of $\gamma = 1$ can be treated exactly in the same way as $\gamma > 1$, with even a simpler calculation.
So in what follows, we will only focus on the case when $\gamma > 1$.
\end{rem}

To obtain Theorem \ref{TypeI}, we will prove the following stronger result, which implies that certain possible concentration of density is removable and won't lead to the formulation of finite-time singularities.

\begin{thm}\label{main}
Let $M > 0$ and all other constants be given in Theorem \ref{TypeI} and satisfy the same constraints in Theorem \ref{TypeI}.
Then $(\rho, u)$ in \eqref{solu-1} is regular at time $T$  provided that
\begin{equation}\label{UPB}
\rho(t, x) \leq\frac{M}{(T - t)^{\kappa}},\quad 0 \leq t < T.
\end{equation}
If $K = 2$, $p$ can be taken to be 4.
\end{thm}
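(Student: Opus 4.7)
My plan is to verify the blowup criterion \eqref{criterion}, namely $\sup_{[0,T)}(\|\nabla\rho\|_{L^q} + \|\nabla u\|_{L^2}) < \infty$, which together with the local theory \eqref{solu-1} will extend the solution past $t = T$. I would follow the Hoff/Huang--Li--Xin/Sun--Wang--Zhang framework \cite{Hoff2,HLX2,HZ2,SWZ,WZ} for the case $\sup_{[0,T)}\|\rho\|_{L^\infty}<\infty$, but now every constant must be tracked as a function of $\|\rho\|_{L^\infty}$, which under \eqref{UPB} is bounded by $M(T-t)^{-\kappa}$. The three inequalities in \eqref{Constraint} are precisely what is required to keep the resulting singular time integrals convergent.

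The main tools are the effective viscous flux $F := (\lambda+2\mu)\nabla\cdot u - P(\rho)$ and the vorticity $\omega := \nabla\times u$, which satisfy
$$\Delta F = \nabla\cdot(\rho\dot u),\qquad \mu\Delta\omega = \nabla\times(\rho\dot u),\qquad \dot u := u_t + u\cdot\nabla u.$$
Elliptic regularity, in the form of Lemma \ref{elementarylem} whose exponent $p\in(3,6)$ depends only on $\lambda/\mu$, provides $L^p$-bounds on $\nabla F$, $\nabla\omega$, and hence on $\nabla^2 u$ modulo $\nabla P$. This is the engine that upgrades low-regularity information on $u$ to an $L^1_t L^\infty_x$-bound on $\nabla u$ sufficient to propagate $\nabla\rho$.

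With this in place the a priori estimates proceed in the usual hierarchy. First, the basic energy \eqref{basicen} yields $\sqrt\rho u\in L^\infty_t L^2_x$, $\rho\in L^\infty_t L^\gamma_x$, and $\nabla u\in L^2_t L^2_x$. Second, testing the momentum equation against $\dot u$ produces bounds on $\|\nabla u\|_{L^\infty_t L^2_x}$ and $\|\sqrt\rho\dot u\|_{L^2_t L^2_x}$. Third, differentiating the momentum equation in $t$ and testing against $\dot u$ yields $\|\sqrt\rho\dot u\|_{L^\infty_t L^2_x}$ and $\|\nabla\dot u\|_{L^2_t L^2_x}$. Fourth, inserting these into the elliptic estimate above gives $\nabla u\in L^\infty_t L^p_x\cap L^1_t L^\infty_x$ via the embedding $W^{1,p}\hookrightarrow L^\infty$. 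Finally, Gronwall applied to the transport equation for $\nabla\rho$ closes the bootstrap and delivers $\nabla\rho\in L^\infty_t L^q_x$. At each step, pressure-type terms contribute factors $\|\rho\|_{L^\infty}^{\alpha}$, which generate time integrals $\int_0^T (T-t)^{-\alpha\kappa}\,dt$; each of the three inequalities in \eqref{Constraint} is engineered to keep one such integral finite and to absorb the corresponding singularity into Gronwall.

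The hardest part will be the third step. Differentiating in time introduces cubic interactions such as $\int\rho|\dot u|^2|\nabla u|\,dx$ and pressure--derivative couplings such as $\int P\,|\nabla\dot u||\nabla u|\,dx$, whose control requires simultaneous interpolation between the basic energy, the $L^p$-elliptic bound on $\nabla^2 u$, and the pointwise density estimate \eqref{UPB}, all while keeping the effective singular exponent strictly below one. The sharpest constraint $\kappa<(p-3)/(p+1)$ comes from balancing $\|P(\rho)\|_{L^p}$ against the embedding $W^{1,p}\hookrightarrow L^\infty$ that drives the fourth step, while $\kappa<1/(3\gamma)$ and $\kappa<1/(\gamma+3)$ arise from $L^2$-- and $L^3$--pressure estimates in the lower tiers of the hierarchy. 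Once every estimate closes, \eqref{criterion} is verified and regularity at $t=T$ follows.
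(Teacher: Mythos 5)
Your outline correctly identifies the blowup criterion \eqref{criterion} as the target and the general Hoff/SWZ/HLX hierarchy, but two of your intermediate claims cannot hold under the hypothesis \eqref{UPB} alone, and your identification of where the exponent $p$ enters is off. First, you claim the ``second step'' (testing the momentum equation against $\dot u$) yields $\|\nabla u\|_{L^\infty_t L^2_x}<\infty$. That is impossible here: under \eqref{UPB} the pressure-driven part of $u$ is genuinely singular, and in fact the paper's Corollary~\ref{cor} only gives $\|\nabla u(t)\|_{L^2}^2\lesssim 1+(T-t)^{-\kappa\gamma}$. The paper avoids this by \emph{not} estimating $u$ directly: it introduces $v$ solving $Lv=A\nabla\rho^\gamma$ (so $\nabla\cdot v$ is proportional to the effective viscous flux and $\nabla\times v=0$) and works with $\omega=u-v$, which satisfies the parabolic equation $\rho\partial_t\omega-L\omega=\rho F$. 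It is only $\|\nabla\omega\|_{L^\infty_tL^2_x}$ (and $\|\sqrt\rho\,\omega_t\|_{L^2_tL^2_x}$, $\|\nabla^2\omega\|_{L^{1+\delta}_tL^2_x}$) that one can show is uniformly bounded. The singular part $v$ is controlled separately via elliptic theory and \eqref{UPB}. Your formulation via $F=(\lambda+2\mu)\nabla\cdot u - P$ and the vorticity $\nabla\times u$ carries the same information about $\omega$ but does not by itself suggest that the correct object to put into the energy estimate is $u-v$ rather than $u$ — and this substitution is exactly what makes the argument close.

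Second, Lemma~\ref{elementarylem} is not an elliptic-regularity statement; it is an algebraic positivity statement used to prove the weighted energy estimate $\sup_t\|\rho|u|^p\|_{L^1}+\int_0^T\||u|^{p/2-1}\nabla u\|_{L^2}^2\,dt\lesssim1$ (Lemma~\ref{energyes}). The quadratic form $f(X,Y)$ there arises from integrating $|u|^{p-2}u$ against the viscous operator, and its positive definiteness (which forces $\lambda<K\mu$ and fixes $p$) is needed so that the diffusion term dominates. This $\rho|u|^p$ bound is an essential input to the $\omega$-energy estimate — it is used to control $\int\rho|u\cdot\nabla u|^2\,dx$ — and the constraint $\kappa<\tfrac{p-3}{p+1}$ comes from the resulting term $(T-t)^{-\kappa(1+\frac1p)\frac{p}{p-3}}$ in Gronwall, not from an embedding $W^{1,p}\hookrightarrow L^\infty$ against $\|P\|_{L^p}$. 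Finally, you will also need a Brezis--Wainger logarithmic inequality (Lemma~\ref{lem1}) to control $\|\nabla v\|_{L^\infty}$ by $\|\rho\|_{L^\infty}^\gamma\ln(e+\|\rho\|_{L^\infty}^{\gamma-1}\|\nabla\rho\|_{L^q})$ in the final Gronwall argument for $\nabla\rho$; a plain $W^{1,q}\hookrightarrow L^\infty$ embedding for $v$ would cost one derivative of $\rho$ with no logarithmic buffer and the Gronwall would not close. With the $u=v+\omega$ decomposition and the $\rho|u|^p$ estimate added, the rest of your outline (the $\dot u$ estimate, the $L^q$-elliptic bounds on $\nabla^2\omega$, the Gronwall for $\nabla\rho$) matches the paper.
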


The proof of Theorem \ref{main} will be presented in Section 3, Section 4 and Section 5.

The third result of this article is to construct the following explicit solution to the compressible Euler and Navier-Stokes equations  which blows up at any given finite time $T > 0$.

\begin{thm}\label{Exam}
For any $T > 0$, the following solution pair
$$\rho(t, x) = C_n^{\frac{1}{\gamma - 1}}\Big(\frac{|x|}{T - t}\Big)^{\frac{2}{\gamma - 1}},\quad u(t, x) = - \frac{2x}{[n(\gamma - 1) + 2](T - t)}$$
solves the compressible  Navier-Stokes equations \eqref{CNS} for all $\gamma > 1$, where constants $C_n$ are given by
\begin{equation}\label{Const}
C_2 = \frac{(\gamma - 1)^2}{2A\gamma^3},\quad C_3 = \frac{3(\gamma - 1)^2}{A\gamma(3\gamma - 1)^2}.
\end{equation}
\end{thm}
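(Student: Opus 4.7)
The plan is to verify the proposed $(\rho, u)$ by direct substitution; the conceptual input is that the ansatz is a self-similar profile with similarity variable $|x|/(T-t)$, suggested by the scaling law of Theorem~\ref{ScalingI}. My first step is the observation that $u(t,x) = -\alpha x/(T-t)$ with $\alpha := 2/[n(\gamma-1)+2]$ is affine linear in $x$ for each fixed $t$; hence $\Delta u \equiv 0$ componentwise and $\nabla\cdot u = -n\alpha/(T-t)$ depends only on $t$, so $\nabla(\nabla\cdot u) \equiv 0$. Both viscous terms on the right-hand side of the momentum equation in \eqref{CNS} therefore vanish identically, and it suffices to verify that $(\rho,u)$ solves the compressible Euler system; in particular the same pair works simultaneously for the Euler equations.

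For the continuity equation I would compute
\begin{equation*}
\partial_t \rho = \tfrac{2}{\gamma-1}\,\frac{\rho}{T-t},\qquad \nabla\rho = \tfrac{2}{\gamma-1}\,\rho\,\frac{x}{|x|^2},\qquad \nabla\cdot u = -\tfrac{2n}{n(\gamma-1)+2}\,\frac{1}{T-t},
\end{equation*}
from which $u\cdot\nabla\rho = -\tfrac{4}{(\gamma-1)[n(\gamma-1)+2]}\,\rho/(T-t)$. Adding the three pieces reduces the net coefficient of $\rho/(T-t)$ to $\{2[n(\gamma-1)+2] - 4 - 2n(\gamma-1)\}/\{(\gamma-1)[n(\gamma-1)+2]\} = 0$, so the continuity equation holds identically; note that this cancellation relies only on the exponent $2/(\gamma-1)$ of $|x|$ in $\rho$ and not on the constant $C_n$.

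For the momentum equation, after using continuity one reduces to the Euler form $\rho(\partial_t u + u\cdot\nabla u) + \nabla P(\rho) = 0$. A short computation yields $\partial_t u + u\cdot\nabla u = \alpha(\alpha-1)\,x/(T-t)^2 = -\tfrac{2n(\gamma-1)}{[n(\gamma-1)+2]^2}\,x/(T-t)^2$, while $\nabla P(\rho) = A\gamma\rho^{\gamma-1}\nabla\rho = \tfrac{2A\gamma}{\gamma-1}\,\rho^\gamma\,x/|x|^2$. The space-time homogeneities of $\rho(\partial_t u + u\cdot\nabla u)$ and of $\nabla P(\rho)$ are both $|x|^{2/(\gamma-1)}\,x\,(T-t)^{-2\gamma/(\gamma-1)}$ (a consequence of the identity $2\gamma/(\gamma-1)-2 = 2/(\gamma-1)$), so matching scalar coefficients collapses to the single algebraic relation $C_n = n(\gamma-1)^2/\{A\gamma\,[n(\gamma-1)+2]^2\}$, which specializes to \eqref{Const} when $n=2$ or $n=3$. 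There is no real analytic obstacle here: the entire substance of the theorem is packaged into choosing the self-similar ansatz suggested by \eqref{Scaling}, and the remainder is straightforward algebraic bookkeeping.
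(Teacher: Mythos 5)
Your verification is correct and in substance matches the paper's: both arguments hinge on the observation that $u$ is affine in $x$, so $\Delta u$ and $\nabla\nabla\cdot u$ vanish and only the Euler part needs to be checked, and both reduce to the same two scalar identities (the exponent of $|x|$ from the continuity equation, the constant $C_n$ from the momentum equation). The only difference is presentational: the paper first passes to the self-similar profile system \eqref{SCNS}, posits $V(y)=\beta y$, and solves the resulting over-determined system \eqref{SCE}, whereas you substitute the explicit pair directly into \eqref{CNS}; your route is a cleaner \emph{verification} of the stated solution, while the paper's route explains how the pair is \emph{discovered} from the scaling \eqref{Scaling}. Your intermediate general formula $C_n = n(\gamma-1)^2/\{A\gamma[n(\gamma-1)+2]^2\}$ is correct and specializes to \eqref{Const}.
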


\begin{rem}\nonumber
For the physical adiabatic number $1 < \gamma < 3$, it is easy to check that there holds
\begin{equation}\nonumber
\rho \in
\begin{cases}
C^\infty([0, T), C^{1 + [\alpha], \alpha - [\alpha]}(\mathbb{R}^n)),\quad {\rm if}\ \alpha > 0\ {\rm is\ not\ an\ integer},\\[-4mm]\\
C^\infty([0, T), C^{\alpha + 1}(\mathbb{R}^n)), \quad {\rm if}\ \alpha > 0\ {\rm is\ an\ odd\ integer},\\[-4mm]\\
C^\infty([0, T), C^{\alpha, 1}(\mathbb{R}^n)), \quad {\rm if}\ \alpha > 0\ {\rm is\ an\ even\ integer}.
\end{cases}
\end{equation}
Here $\alpha = \frac{3 - \gamma}{\gamma - 1}$.
\end{rem}

The construction of the explicit blowup example is inspired by the above dimension analysis and the scaling invariance \eqref{Scaling}. Indeed, we consider self-similar solutions of the following form
\begin{equation}\label{SSS}
\begin{cases}
\rho(t, x) = \frac{1}{(T - t)^{\frac{1}{\gamma}}}\Theta(\frac{x}{(T - t)^{\frac{\gamma + 1}{2\gamma}}}),\\[-4mm]\\
u(t, x) = \frac{1}{(T - t)^{\frac{\gamma - 1}{2\gamma}}}V(\frac{x}{(T - t)^{\frac{\gamma + 1}{2\gamma}}}).
\end{cases}
\end{equation}
One can derive that $(\Theta, V)$ are governed by \eqref{SCNS}. Note that \eqref{SCNS} is still a complicated system of nonlinear differential equations and in general hard to solve. Fortunately, the special structure of \eqref{SCNS} allows us to construct explicit solutions to it, which gives the explicit solution in Theorem \ref{Exam}.  Details are presented in Section 6. We remark that the solution in Theorem \ref{Exam} is constructed in H${\rm \ddot{o}}$lder spaces. It is an interesting question to construct singular solutions to \eqref{SCNS} which live in Sobolev spaces before the blowup time.

\section{Preliminaries and Proof of Theorem \ref{TypeI}}

Let us first give the proof of Theorem \ref{TypeI}, by assuming the validity of Theorem \ref{main}. The proof of Theorem \ref{main} will be postulated to Section 3, Section 4 and Section 5.

\begin{proof}[Proof of Theorem \ref{TypeI}]

 With loss of generality, by using \eqref{UPB-1}, one may assume that
$$|\nabla\cdot u(t, \cdot)| \leq \frac{\kappa}{T - t},\quad 0 \leq t < T.$$
By the continuity equation in \eqref{CNS}, one has
\begin{eqnarray}\nonumber
\rho(t, x) &\leq& \|\rho_0\|_{L^\infty}e^{\int_{0}^t\|\nabla\cdot u(t, \cdot)\|_{L^\infty}ds}\\\nonumber
&\leq& \frac{\|\rho_0\|_{L^\infty}T^\kappa}{(T - t)^\kappa},\quad \forall\ 0 \leq t < T.
\end{eqnarray}
Then the proof of Theorem \ref{TypeI} is a straightforward consequence of Theorem \ref{main}.
\end{proof}

Let $v(t, x)$ be the solution to
\begin{equation}\nonumber
Lv = A\nabla\rho^\gamma,
\end{equation}
where the elliptic operator $L$ is defined by
$$Lu = \mu\Delta u + (\lambda + \mu)\nabla\nabla\cdot u.$$
In the case of a smooth bounded domain, we also impose the boundary condition
$$v = 0\ \ {\rm on}\ \partial\Omega.$$
By standard elliptic estimates (see, for instance, \cite{SWZ} and the references therein), the assumption on $\rho$ in Theorem \eqref{main} and the interpolation inequality, one has
\begin{lem}\label{lem3}
Under the assumption of Theorem \ref{main}, there hold
\begin{equation}\label{4}
\begin{cases}
\|\nabla v(t)\|_{L^q} \lesssim \|\rho^\gamma\|_{L^q} \lesssim \|\rho^\gamma\|_{L^1}^{\frac{1}{q}}(T - t)^{- \kappa\gamma(1 - \frac{1}{q})},\quad 1 < q < \infty,\\[-4mm]\\
\|\nabla v(t)\|_{{\rm BMO}} \lesssim \|\rho^\gamma\|_{L^\infty} \lesssim (T - t)^{- \kappa\gamma},
\end{cases}
\end{equation}
and
\begin{eqnarray}\label{5}
\|\nabla^2 v(t)\|_{L^q} \lesssim \|\rho\|_{L^\infty}^{\gamma - 1}\|\nabla\rho\|_{L^q} \lesssim (T - t)^{- \kappa(\gamma - 1)}\|\nabla\rho\|_{L^q}.
\end{eqnarray}
\end{lem}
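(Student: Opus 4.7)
The plan is to combine standard elliptic regularity for the Lamé operator $L = \mu\Delta + (\lambda+\mu)\nabla\nabla\cdot$ with the pointwise density bound \eqref{UPB} and the energy control $\rho \in L^\infty_t L^\gamma_x$ coming from \eqref{basicen}. Since $L$ is uniformly elliptic under the physical constraint \eqref{phycons}, the operator $\nabla L^{-1}\nabla$ acts as a Calder\'on--Zygmund operator (on $\mathbb{R}^n$ directly, or via standard estimates for the Dirichlet problem on bounded smooth domains, as cited from \cite{SWZ}). This immediately yields
\begin{equation}\nonumber
\|\nabla v\|_{L^q} \lesssim \|\rho^\gamma\|_{L^q}\ (1<q<\infty),\qquad \|\nabla v\|_{\mathrm{BMO}} \lesssim \|\rho^\gamma\|_{L^\infty},
\end{equation}
as well as
\begin{equation}\nonumber
\|\nabla^2 v\|_{L^q} \lesssim \|\nabla \rho^\gamma\|_{L^q} = \gamma \|\rho^{\gamma-1}\nabla\rho\|_{L^q} \leq \gamma\|\rho\|_{L^\infty}^{\gamma-1}\|\nabla\rho\|_{L^q},
\end{equation}
which is exactly the right-hand inequality of \eqref{5}.

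Next I would insert the hypothesis \eqref{UPB}, $\rho(t,x)\leq M(T-t)^{-\kappa}$, to turn pointwise bounds on $\rho$ into time-weighted bounds on $\rho^\gamma$. For the $\mathrm{BMO}$ estimate this gives $\|\rho^\gamma\|_{L^\infty}\leq M^\gamma (T-t)^{-\kappa\gamma}$ directly. For the $L^q$ estimate I would interpolate between $L^1$ and $L^\infty$: writing $\rho^{\gamma q} = \rho^\gamma \cdot \rho^{\gamma(q-1)}$ and invoking the energy law \eqref{basicen} to bound $\|\rho^\gamma\|_{L^1} = \|\rho\|_{L^\gamma}^\gamma$ by the initial data, one obtains
\begin{equation}\nonumber
\|\rho^\gamma\|_{L^q} \leq \|\rho\|_{L^\infty}^{\gamma(1-1/q)}\|\rho^\gamma\|_{L^1}^{1/q} \lesssim \|\rho^\gamma\|_{L^1}^{1/q}(T-t)^{-\kappa\gamma(1-1/q)},
\end{equation}
which yields \eqref{4}. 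Similarly, the factor $(T-t)^{-\kappa(\gamma-1)}$ in \eqref{5} comes from plugging $\|\rho\|_{L^\infty}\leq M(T-t)^{-\kappa}$ into the $\rho^{\gamma-1}$ factor.

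The only genuine issue is the $\mathrm{BMO}$ endpoint of the Calder\'on--Zygmund estimate (since $L^\infty$ is not preserved), but this is precisely the classical substitute and is standard for the Lam\'e system; all other steps are H\"older's inequality and interpolation, so there is no significant technical obstruction beyond quoting the appropriate elliptic regularity results. In particular, the $\gamma=1$ case is covered by the same argument once one notes, as in the remark before the lemma, that $\|\rho\|_{L^1}$ is conserved and so replaces $\|\rho^\gamma\|_{L^1}$ in the interpolation.
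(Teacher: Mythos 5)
Your proof is correct and matches the paper's (implicit) argument: the lemma is introduced in the paper with only a citation to "standard elliptic estimates," and your filling-in — $L^q$ and $L^\infty\to\mathrm{BMO}$ boundedness of the Calderón--Zygmund operator $\nabla L^{-1}\nabla$, the second-derivative estimate $\|\nabla^2 v\|_{L^q}\lesssim\|\nabla\rho^\gamma\|_{L^q}\lesssim\|\rho\|_{L^\infty}^{\gamma-1}\|\nabla\rho\|_{L^q}$, interpolation of $\rho^\gamma$ between $L^1$ and $L^\infty$, and the pointwise bound \eqref{UPB} together with the energy-law control of $\|\rho^\gamma\|_{L^1}$ (or mass conservation when $\gamma=1$) — is exactly what the authors have in mind. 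No gaps.
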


The next lemma is well-known and can be found in, for instance, \cite{BW, KT, SWZ}.
\begin{lem}\label{lem1}
Let $v$ be defined above and $q > 3$. Then
\begin{eqnarray}\nonumber
\|\nabla v\|_{L^\infty} \lesssim 1 + \|\rho\|_{L^\infty}^\gamma\ln\big(e + \|\rho\|_{L^\infty}^{\gamma - 1}\|\nabla\rho\|_{L^q}\big).
\end{eqnarray}
\end{lem}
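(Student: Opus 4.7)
The plan is to derive the bound by combining a BMO-type logarithmic Sobolev inequality with the elliptic estimates already recorded in Lemma \ref{lem3}. Since the statement is attributed to well-known references (Brezis--Wainger, Kozono--Taniguchi), the proof should really be a short deduction rather than a new argument.

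First I would invoke the following sharp interpolation estimate, valid on $\mathbb{R}^3$ (or on the bounded domain $\Omega$ after extension/localization): for any $q>3$ and any sufficiently smooth vector field $w$,
\begin{equation}\nonumber
\|w\|_{L^\infty}\;\lesssim\;1+\|w\|_{{\rm BMO}}\,\ln\!\bigl(e+\|\nabla w\|_{L^q}+\|w\|_{L^q}\bigr).
\end{equation}
This is the Brezis--Wainger / Kozono--Taniguchi logarithmic Sobolev inequality, which upgrades a BMO bound to an $L^\infty$ bound at the cost of a logarithm of a stronger norm. The constant ``$1$'' absorbs the size of $w$ in any reasonable low norm (say $L^1$ on a bounded domain or via cutoff arguments on $\mathbb{R}^3$).

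Next I would apply the above to $w=\nabla v$. The two estimates in Lemma \ref{lem3} yield precisely the two ingredients that feed into the right-hand side:
\begin{equation}\nonumber
\|\nabla v\|_{{\rm BMO}}\;\lesssim\;\|\rho\|_{L^\infty}^\gamma,
\qquad
\|\nabla^2 v\|_{L^q}\;\lesssim\;\|\rho\|_{L^\infty}^{\gamma-1}\|\nabla\rho\|_{L^q}.
\end{equation}
Inserting these into the logarithmic inequality immediately gives
\begin{equation}\nonumber
\|\nabla v\|_{L^\infty}\;\lesssim\;1+\|\rho\|_{L^\infty}^\gamma\,\ln\!\bigl(e+\|\rho\|_{L^\infty}^{\gamma-1}\|\nabla\rho\|_{L^q}+\|\nabla v\|_{L^q}\bigr),
\end{equation}
which is almost the claimed bound.

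The only remaining point is to remove the $\|\nabla v\|_{L^q}$ contribution inside the logarithm. Using the first bound in Lemma \ref{lem3}, $\|\nabla v\|_{L^q}\lesssim \|\rho\|_{L^\infty}^{\gamma-1}\|\rho\|_{L^\gamma}^{\gamma/q}$, which by the energy bound \eqref{basicen} is controlled by $\|\rho\|_{L^\infty}^{\gamma-1}$ up to a constant. Since $\ln(e+a+b)\le \ln(e+a)+\ln(e+b)$ and $\|\rho\|_{L^\infty}^\gamma\ln(e+\|\rho\|_{L^\infty}^{\gamma-1})$ is itself absorbed by the right-hand side after a trivial rearrangement (or, more simply, dominated by the main logarithmic term because $\|\nabla\rho\|_{L^q}$ controls $\|\rho\|_{L^\infty}^{\gamma-1}$ up to bounded factors via Sobolev embedding), we obtain
\begin{equation}\nonumber
\|\nabla v\|_{L^\infty}\;\lesssim\;1+\|\rho\|_{L^\infty}^\gamma\,\ln\!\bigl(e+\|\rho\|_{L^\infty}^{\gamma-1}\|\nabla\rho\|_{L^q}\bigr),
\end{equation}
as required. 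The only real obstacle is bookkeeping at this last step; the logarithmic Sobolev inequality itself is standard and the two elliptic estimates have already been established. No extra ideas are needed beyond being careful that the lower-order $L^q$ term inside the logarithm can be swept into the $1+\cdots$ prefactor.
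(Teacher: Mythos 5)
Your overall route --- a BMO-to-$L^\infty$ logarithmic interpolation inequality fed by the elliptic estimates of Lemma \ref{lem3} --- is the same as the paper's. The difference is in the precise form of the interpolation inequality used, and it matters. The paper quotes the variant from \cite{SWZ}, namely
\[
\|\nabla v\|_{L^\infty} \lesssim 1 + \|\nabla v\|_{L^2} + \|\nabla v\|_{\rm BMO}\ln\bigl(e + \|\nabla^2 v\|_{L^q}\bigr),
\]
in which the low-order term sits \emph{outside} the logarithm as a plain additive piece. Then $\|\nabla v\|_{L^2}\lesssim\|\rho^\gamma\|_{L^2}\lesssim\|\rho\|_{L^\gamma}^{\gamma/2}\|\rho\|_{L^\infty}^{\gamma/2}\lesssim\|\rho\|_{L^\infty}^{\gamma/2}$ is disposed of immediately by $a^{1/2}\le 1+a$ together with $\ln(e+\cdot)\ge 1$, and the proof closes in two lines with no bookkeeping.

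Your version puts the lower-order norm $\|w\|_{L^q}$ \emph{inside} the logarithm, which is what creates the ``only remaining point'' of scrubbing $\|\nabla v\|_{L^q}$ out of the logarithmic argument, and it is precisely there that your proposal has a genuine gap. First, the interpolation you quote is off: $\|\nabla v\|_{L^q}\lesssim\|\rho^\gamma\|_{L^q}\lesssim\|\rho\|_{L^\gamma}^{\gamma/q}\|\rho\|_{L^\infty}^{\gamma(1-1/q)}$, so the exponent on $\|\rho\|_{L^\infty}$ is $\gamma(1-1/q)$, not $\gamma-1$; these coincide only if $q=\gamma$. More seriously, the alternative you offer --- that ``$\|\nabla\rho\|_{L^q}$ controls $\|\rho\|_{L^\infty}^{\gamma-1}$ up to bounded factors via Sobolev embedding'' --- is circular as stated: $W^{1,q}\hookrightarrow L^\infty$ gives $\|\rho\|_{L^\infty}\lesssim\|\rho\|_{L^q}+\|\nabla\rho\|_{L^q}$, and the $\|\rho\|_{L^q}$ piece itself has to be interpolated back against $\|\rho\|_{L^\infty}$, since the only a priori low-order bound available is $\|\rho\|_{L^\gamma}\lesssim 1$. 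One can close the loop with a Young-type absorption, but as written the step does not stand on its own. Switching to the \cite{SWZ} form of the log-Sobolev inequality, with the lower-order term additive rather than logarithmic, removes the difficulty entirely and is what the paper does.
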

\begin{proof}
Indeed, by \cite{BW, KT, SWZ}, one has
\begin{eqnarray}\nonumber
\|\nabla v\|_{L^\infty} \lesssim 1 + \|\nabla v\|_{L^2} + \|\nabla v\|_{{\rm BMO}}\ln\big(e + \|\nabla^2 v\|_{L^q}\big).
\end{eqnarray}
Then \eqref{4}, \eqref{5} and the basic energy law \eqref{basicen} imply that
\begin{eqnarray}\nonumber
&&\|\nabla v\|_{L^2} + \|\nabla v\|_{{\rm BMO}}\ln\big(e + \|\nabla^2 v\|_{L^q}\big)\\\nonumber
&&\lesssim \|\rho\|_{L^\infty}^{\frac{\gamma}{2}} + \|\rho\|_{L^\infty}^\gamma\ln\big(e + \|\nabla\rho^\gamma\|_{L^q}\big)\\\nonumber
&&\lesssim \|\rho\|_{L^\infty}^\gamma\ln\big(e + \|\rho\|_{L^\infty}^{\gamma - 1}\|\nabla\rho\|_{L^q}\big).
\end{eqnarray}
\end{proof}

At last, let us give an elementary lemma.
\begin{lem}\label{elementarylem}
Let $\lambda < K\mu$, $K \leq 7$ and the physical constraint \eqref{phycons} be satisfied. Then there exists $p \in (3, 6)$ such that the following quadratic form
$$f(X, Y) = 4\mu(X^2 + Y^2) - (\lambda + \mu)R^2Y^2 + 4\mu R Y^2$$
when $R$ is evaluated at $p - 2$, has a lower bound of $X^2 + Y^2$ multiplied by a small positive constant. Moreover, if $K = 2$, then $p$ can be taken to be 4.
\end{lem}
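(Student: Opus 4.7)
The plan is to exploit the fact that $f$ is already diagonal in $(X,Y)$: grouping the $Y^2$ terms gives
$$f(X,Y) = 4\mu X^2 + c(R)\,Y^2, \qquad c(R) := 4\mu(1+R) - (\lambda+\mu)R^2,$$
so the desired lower bound $f(X,Y) \geq \varepsilon(X^2+Y^2)$ with $\varepsilon > 0$ is equivalent to the strict positivity of $c(p-2)$, at which point one may take $\varepsilon := \min\{4\mu,\,c(p-2)\}$. The $X^2$ coefficient is automatic since $4\mu > 0$ from the physical constraint \eqref{phycons}. Thus the entire lemma reduces to producing some $p \in (3,6)$ at which $c(p-2)$ is positive.

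The main step is to evaluate $c$ at the left endpoint $R = 1$ (corresponding to $p = 3$), where a one-line computation yields
$$c(1) = 8\mu - (\lambda+\mu) = 7\mu - \lambda.$$
The hypothesis $\lambda < K\mu \leq 7\mu$ is \emph{strict}, so $c(1) > 0$ even in the borderline case $K = 7$. Since $R \mapsto c(R)$ is a polynomial, continuity then delivers some $\delta > 0$ with $c(R) > 0$ throughout $[1, 1+\delta]$; any $p \in (3,\,3+\delta)$ then satisfies $p \in (3,6)$ together with the required inequality, and one reads off $\varepsilon$ as above.

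For the final assertion, when $K = 2$ I would simply evaluate $c$ at $R = 2$ (i.e.\ $p = 4$) directly: $c(2) = 12\mu - 4(\lambda+\mu) = 4(2\mu - \lambda)$, which is strictly positive exactly because $\lambda < 2\mu$ in this case. No substantive obstacle arises; the only mild point worth flagging is that one needs $c(1) > 0$, not merely $c(1) \geq 0$, so that continuity can furnish a $p$ strictly greater than $3$ — this matters for Theorem \ref{TypeI} because the constraint \eqref{Constraint} contains the factor $\frac{p-3}{p+1}$ and must leave room for a positive $\kappa$. The strict hypothesis $\lambda < K\mu$ is precisely what ensures this, uniformly up to the endpoint $K = 7$.
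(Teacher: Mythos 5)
Your proposal is correct and follows essentially the same route as the paper: both rewrite $f(X,Y)=4\mu X^2+Y^2\,c(R)$ with $c(R)=4\mu(1+R)-(\lambda+\mu)R^2$, evaluate $c(1)=7\mu-\lambda$ and $c(2)=4(2\mu-\lambda)$, and use continuity from the strict inequality $\lambda<K\mu$ to pick $p\in(3,6)$ near $3$ (resp.\ $p=4$ when $K=2$). Your closing remark about needing $c(1)>0$ strictly so that $p>3$, and hence $\frac{p-3}{p+1}>0$ in \eqref{Constraint}, is a fair observation but not a departure from the paper's argument.
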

\begin{proof}
With a little bit ambiguity of notation $f$, we consider the following quadratic polynomial for $1 < R < 4$:
\begin{eqnarray}\nonumber
f(R) = 4\mu - (\lambda + \mu)R^2 + 4\mu R.
\end{eqnarray}
Note that $\lambda + \mu > 0$ under \eqref{phycons}. It is clear that $f(2) = 4(2\mu - \lambda)$ and $f(1) = 7\mu - \lambda$. By an elementary analysis, one can conclude that
\begin{eqnarray}\nonumber
f(1+) > 0, &&{\rm if}\ \lambda < K\mu,\quad K\leq 7,\\\nonumber
f(2) > 0, &&{\rm if}\ \lambda < K \mu,\quad K \leq 2.
\end{eqnarray}
Hence, for all $\lambda < K\mu$, $K \leq 7$ satisfying \eqref{phycons}, there exists $p \in (3, 6)$ which might be very close to $3$ such that
\begin{eqnarray}\nonumber
f(p - 2) > 0.
\end{eqnarray}
Moreover, if $K \leq 2$ and \eqref{phycons} holds, then $p$ can be taken to be 4. Then the lemma is proved by rewriting $f(X, Y)$ as
$$f(X, Y) =  4\mu X^2 + Y^2f(R).$$
\end{proof}

\section{Energy Estimates}

In this section we follow the framework in \cite{SWZ, HZ2} and prove two energy estimates. So we will omit those same computations as there. The first one gives the uniform in time bound of the $L^1$ norm of $\rho |u|^p$ under the assumptions in Theorem \ref{main}. The second one is on the estimate of $L^2$ type energy estimate for $\nabla(u - v)$, together with a space-time  estimate of $\nabla^2(u - v)$. Then we give a corollary which asserts that the $L^2$ norm of $\nabla u$ may grow in time at $(T - t)^{- \frac{\kappa\gamma}{2}}$, while $\int_0^T\|\nabla u\|_{L^6}^{1 + \delta}dt$ is still bounded for some $\delta \in (0, 1)$.
From now on we focus on the whole space case. The bounded domain case with smooth boundaries can be treated similarly without any essential difficulty in view of the homogenous Dirichlet boundary condition for $u$ and $v$.

Let us first prove the following lemma.
\begin{lem}\label{energyes}
Under the assumptions in Theorem \ref{main}, there holds
$$\big\|\rho|u|^{p}\big\|_{L^1} + \int_0^T\big\||u|^{\frac{p}{2} - 1}|\nabla u|\big\|_{L^2}^2dt \lesssim 1.$$
\end{lem}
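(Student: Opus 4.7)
The plan is to test the momentum equation from \eqref{CNS} against $|u|^{p-2}u$ and integrate over $\Omega$. Using the continuity equation, the time-derivative and convective contributions on the left collapse into $\frac{1}{p}\frac{d}{dt}\int \rho|u|^p\,dx$. Integration by parts on the viscous and pressure terms produces, schematically,
\[
\frac{1}{p}\frac{d}{dt}\int \rho|u|^p\,dx + \mathcal{D}(u) = \int P\,\nabla\cdot(|u|^{p-2}u)\,dx,
\]
where $\mathcal{D}(u)$ is the dissipation functional obtained from $\mu\Delta u + (\lambda+\mu)\nabla\nabla\cdot u$ tested against $|u|^{p-2}u$; it involves the three quantities $|u|^{p/2-1}|\nabla u|$, $|u|^{p/2-1}|\nabla|u||$ and $|u|^{p/2-1}\nabla\cdot u$, with a $(\lambda+\mu)(p-2)$ cross-term between $\nabla\cdot u$ and $u\cdot\nabla|u|$.

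First I would recognize that, with the choice $R=p-2$, the pointwise quadratic structure in $\mathcal{D}(u)$ is exactly the one treated by Lemma \ref{elementarylem}. Since $\lambda<K\mu$ with $K\leq 7$, that lemma guarantees $\mathcal{D}(u)\geq c\int |u|^{p-2}|\nabla u|^2\,dx$ for some $c>0$, which in particular controls $\int \bigl\||u|^{p/2-1}|\nabla u|\bigr\|_{L^2}^2\,dt$.

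Next I would estimate the pressure term. Since $\nabla\cdot(|u|^{p-2}u)$ is pointwise bounded by $|u|^{p-2}\bigl(|\nabla\cdot u|+(p-2)|\nabla|u||\bigr)$, Young's inequality gives
\[
\Bigl|\int P\,\nabla\cdot(|u|^{p-2}u)\,dx\Bigr|\leq \tfrac{c}{2}\int |u|^{p-2}|\nabla u|^2\,dx + C\int P^2|u|^{p-2}\,dx.
\]
Using $P=A\rho^\gamma$ and \eqref{UPB}, together with H\"older with exponents $p/(p-2),\,p/2$ and conservation of mass $\|\rho\|_{L^1}=\|\rho_0\|_{L^1}$,
\[
\int P^2|u|^{p-2}\,dx \lesssim \|\rho\|_{L^\infty}^{2\gamma-1}\|\rho|u|^p\|_{L^1}^{(p-2)/p}\|\rho\|_{L^1}^{2/p}\lesssim \|\rho\|_{L^\infty}^{2\gamma-1}\bigl(1+\|\rho|u|^p\|_{L^1}\bigr).
\]
Absorbing the dissipation piece on the left and setting $\Phi(t):=1+\|\rho|u|^p\|_{L^1}$, one arrives at the differential inequality
\[
\Phi'(t) + \tfrac{c}{2}\int |u|^{p-2}|\nabla u|^2\,dx \leq C\,\|\rho\|_{L^\infty}^{2\gamma-1}\,\Phi(t).
\]
Gr\"onwall then yields $\Phi\in L^\infty(0,T)$ and an integrated bound on $\||u|^{p/2-1}|\nabla u|\|_{L^2}^2$ provided $\int_0^T\|\rho\|_{L^\infty}^{2\gamma-1}\,dt<\infty$; under \eqref{UPB} this amounts to $\kappa(2\gamma-1)<1$, which is implied by the condition $\kappa<1/(3\gamma)$ in \eqref{Constraint}.

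The hard part is not the Gr\"onwall step but arranging the dissipation to be coercive. Differentiating $|u|^{p-2}$ creates the $(p-2)$-weighted cross-term between $\nabla\cdot u$ and $\nabla|u|$, and without the structural inequality furnished by Lemma \ref{elementarylem} one cannot recover a clean lower bound of the form $c\int |u|^{p-2}|\nabla u|^2\,dx$. This is precisely the mechanism by which the viscosity restriction $\lambda<K\mu$ enters the proof; once it is in place, the remaining density-dependent forcing is integrable in time thanks to \eqref{UPB} and the first constraint in \eqref{Constraint}.
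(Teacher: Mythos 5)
Your proof is correct and follows essentially the same strategy as the paper: test the momentum equation against $|u|^{p-2}u$, invoke Lemma \ref{elementarylem} at $R=p-2$ to get coercivity of the dissipation, and close with a Gronwall-type argument. The one small difference is in the pressure term: you apply Young's inequality first and then interpolate $\rho$ between $L^\infty$ and $L^1$ (conservation of mass), which yields the exponent $2\gamma-1$ on $\|\rho\|_{L^\infty}$, whereas the paper keeps the three-factor H\"older structure and interpolates between $L^\infty$ and $L^\gamma$ (the energy bound $\|\rho^\gamma\|_{L^1}\lesssim 1$), obtaining the slightly smaller exponent $\tfrac{2}{p}\bigl[(p-1)\gamma+1-\tfrac{p}{2}\bigr]=2\gamma-1-\tfrac{2(\gamma-1)}{p}$. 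Since $\kappa(2\gamma-1)<\tfrac{2\gamma-1}{3\gamma}<1$ under \eqref{Constraint}, your cruder exponent is still time-integrable, so the argument goes through; the paper's sharper interpolation also makes the right-hand side sublinear in $\|\rho|u|^p\|_{L^1}$ (power $1-\tfrac{2}{p}$), which would let one avoid Gronwall entirely, but that is only a cosmetic gain here.
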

\begin{proof}
Standard estimates as \cite{Hoff2, HLX, SWZ} yield
\begin{eqnarray}\nonumber
&&\frac{1}{p}\frac{d}{dt}\int \rho |u|^p dx \leq A\int \rho^\gamma\nabla\cdot (|u|^{p - 2}u) dx\\\nonumber
&&\quad -\ \int |u|^{p - 2}\big(\mu|\nabla u|^2 + (p - 2)[\mu - (\lambda + \mu)\frac{p - 2}{4}]|\nabla |u||^2\big) dx.
\end{eqnarray}
Set $X^2 = |\nabla u|^2 - |\nabla |u||^2$ and $Y^2 = |\nabla |u||^2$. Then by Lemma \ref{elementarylem}, one has
$$\int |u|^{p - 2}\big(\mu|\nabla u|^2 + (p - 2)[\mu - (\lambda + \mu)\frac{p - 2}{4}]|\nabla |u||^2\big) dx \geq c_0\int |u|^{p - 2}|\nabla u|^2 dx$$
and thus
\begin{eqnarray}\label{1}
\frac{d}{dt}\int \rho |u|^p dx + pc_0\int |u|^{p - 2}|\nabla u|^2 dx \leq Ap\int \rho^\gamma\nabla\cdot (|u|^{p - 2}u) dx.
\end{eqnarray}

Let us treat the right hand side in \eqref{1} as follows:
\begin{eqnarray}\nonumber
&&\int \rho^\gamma\nabla\cdot (|u|^{p - 2}u) dx\\\nonumber
&&\lesssim \int \rho^{\gamma + \frac{1}{p} - \frac{1}{2}}(\rho|u|^{p})^{\frac{1}{2} - \frac{1}{p}}(|u|^{\frac{p}{2} - 1}|\nabla u|)dx\\\nonumber
&&\lesssim \|\rho\|_{L^{p\gamma + 1 - \frac{p}{2}}}^{\gamma + \frac{1}{p} - \frac{1}{2}}\big\|\rho|u|^{p}\big\|_{L^1}^{\frac{1}{2} - \frac{1}{p}}\big\||u|^{\frac{p}{2} - 1}|\nabla u|\big\|_{L^2}.
\end{eqnarray}
Inserting the above into \eqref{1} and using Young's inequality, one has
\begin{eqnarray}\label{9}
\frac{d}{dt}\int \rho |u|^p dx + pc_0\int |u|^{p - 2}|\nabla u|^2 dx \lesssim \|\rho\|_{L^{p\gamma + 1 - \frac{p}{2}}}^{2\gamma + \frac{2}{p} - 1}\big\|\rho|u|^{p}\big\|_{L^1}^{1 - \frac{2}{p}}.
\end{eqnarray}
It follows from \eqref{UPB} that for $\frac{T}{2} \leq t < T$
\begin{eqnarray}\nonumber
\|\rho\|_{L^{p\gamma + 1 - \frac{p}{2}}}^{2\gamma + \frac{2}{p} - 1} &\lesssim& \|\rho^\gamma\|_{L^1}^{\frac{2}{p}}\|\rho\|_{L^\infty}^{\frac{2}{p}[(p - 1)\gamma + 1 - \frac{p}{2}]}\\\nonumber
&\lesssim& (T - t)^{- \frac{2}{p}[(p - 1)\gamma + 1 - \frac{p}{2}]\kappa}.
\end{eqnarray}
Due to \eqref{Constraint}, one has
\begin{equation}\nonumber
\frac{2}{p}[(p - 1)\gamma + 1 - \frac{p}{2}]\kappa <  \begin{cases}\frac{\gamma + (1 - \frac{2}{p})(\gamma - 1)}{\gamma + 3} < \frac{\gamma + \frac{1}{2}}{\gamma + 3},\quad 1 \leq \gamma \leq \frac{3}{2},
\\[-4mm]\\ \frac{\gamma + (1 - \frac{2}{p})(\gamma - 1)}{3\gamma} < \frac{2\gamma - 1}{3\gamma},\quad \gamma > \frac{3}{2}.
\end{cases}
\end{equation}
Hence,  $(T - t)^{- \frac{2}{p}[(p - 1)\gamma + 1 - \frac{p}{2}]\kappa}$ is integrable on $[0, T]$ and
\begin{eqnarray}\label{2}
\big\|\rho|u|^{p}\big\|_{L^1} \lesssim 1.
\end{eqnarray}
Using \eqref{2} and integrating \eqref{9} with respect to time, one further attains
$$\int_0^T\big\||u|^{\frac{p}{2} - 1}|\nabla u|\big\|_{L^2}^2dt \lesssim 1.$$ The proof of the lemma is completed.
\end{proof}

Recall the important quantity
$$\omega = u - v,$$
whose divergence is refereed to as the effective viscous flux in literature (see \cite{Hoff2} for instance).
Using \eqref{4} and Lemma \ref{energyes}, we have

\begin{lem}\label{fluxlem}
Let
\begin{equation}\nonumber
\delta = \begin{cases}
\frac{\gamma + 2}{\gamma + 4}  ,\quad 1 \leq \gamma \leq \frac{3}{2},\\[-4mm]\\
\frac{3\gamma - 1}{3\gamma + 1},\quad \gamma > \frac{3}{2}.
\end{cases}
\end{equation}
Under the assumptions of Theorem  \ref{main}, it holds that
$$\sup_{0 \leq t < T}\|\nabla \omega\|_{L^2} \lesssim 1,\quad \int_0^T\|\sqrt{\rho}\omega_t\|_{L^2}^2dt \lesssim 1,\quad \int_0^T\|\nabla^2\omega\|_{L^2}^{1 + \delta}dt \lesssim 1. $$
\end{lem}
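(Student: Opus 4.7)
The plan is to test the identity $L\omega=\rho(u_t+u\cdot\nabla u)=\rho\dot u$, which follows from subtracting $Lv=A\nabla\rho^\gamma$ from the momentum equation in \eqref{CNS}, against $\omega_t$. Integration by parts (using $\omega|_{\partial\Omega}=0$ or decay at infinity) yields the core identity
\begin{equation*}
\frac{1}{2}\frac{d}{dt}\int\bigl(\mu|\nabla\omega|^2+(\lambda+\mu)|\nabla\cdot\omega|^2\bigr)dx+\int\rho|\omega_t|^2\,dx=-\int\rho v_t\cdot\omega_t\,dx-\int\rho(u\cdot\nabla u)\cdot\omega_t\,dx.
\end{equation*}
The first two conclusions of the lemma will follow by absorbing a fraction of $\int\rho|\omega_t|^2$ from the right via Young's inequality and integrating a Gronwall-type differential inequality in $t$.

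The crux is to bound the right-hand side in a way that leaves integrable-in-time singular factors. For the pressure-transport term $\int\rho v_t\cdot\omega_t$, I would differentiate $Lv=A\nabla\rho^\gamma$ in time and replace $\partial_t\rho^\gamma$ by $-\nabla\cdot(u\rho^\gamma)-(\gamma-1)\rho^\gamma\nabla\cdot u$ from the continuity equation, then apply elliptic regularity for $L$ to estimate $\|v_t\|_{L^2}$ (hence $\|\sqrt\rho v_t\|_{L^2}$ via \eqref{UPB}) by appropriate norms of $u\rho^\gamma$ and $\rho^\gamma\nabla u$. For the convection term I would split $\nabla u=\nabla\omega+\nabla v$, control $\|\sqrt\rho u\|_{L^p}$ through $\|\rho|u|^p\|_{L^1}^{1/p}$ from Lemma \ref{energyes}, use Lemma \ref{lem3} for $\nabla v$, and dump the remaining density factors into $\|\rho\|_{L^\infty}$. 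The constraints in \eqref{Constraint} are designed exactly so that the combined time-dependent coefficient $(T-t)^{-\alpha(\kappa,\gamma,p)}$ is integrable on $[0,T)$; Gronwall then closes and delivers $\sup_t\|\nabla\omega\|_{L^2}\lesssim 1$ together with $\int_0^T\|\sqrt\rho\omega_t\|_{L^2}^2\,dt\lesssim 1$.

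For the third assertion, I would invoke standard elliptic regularity for $L\omega=\rho\dot u$ to obtain
\begin{equation*}
\|\nabla^2\omega\|_{L^2}\lesssim\|\rho\dot u\|_{L^2}\lesssim\|\rho\|_{L^\infty}^{1/2}\bigl(\|\sqrt\rho\omega_t\|_{L^2}+\|\sqrt\rho v_t\|_{L^2}\bigr)+\|\rho u\cdot\nabla u\|_{L^2},
\end{equation*}
and then estimate $\|\rho u\cdot\nabla u\|_{L^2}$ by a H\"older interpolation involving $\|\rho|u|^p\|_{L^1}$, $\|\rho\|_{L^\infty}$, the just-established $\|\nabla\omega\|_{L^2}$, and $\|\nabla v\|_{L^q}$. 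Raising to the power $1+\delta$ and integrating in time, the $\|\sqrt\rho\omega_t\|_{L^2}^{1+\delta}$ contribution is absorbed via the $L^2_t$ bound and H\"older in time, while the purely density-driven factors become $(T-t)^{-c\kappa}$ with $c(1+\delta)\kappa<1$; the explicit formulas for $\delta$ in the statement are exactly what is needed to keep these exponents subcritical under \eqref{Constraint}, and the case split between $1\le\gamma\le 3/2$ and $\gamma>3/2$ simply reflects which of the three competing constraints in \eqref{Constraint} is binding.

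The hard part, I expect, is not the structural derivation of the energy identity but the careful bookkeeping of the $(T-t)^{-\kappa}$ powers emerging from every H\"older split: each nonlinear term produces a factor of the form $(T-t)^{-a\kappa-b\kappa\gamma}$, and one must verify that the cumulative exponent remains strictly below the critical threshold dictated by \eqref{Constraint}. Keeping all the interpolations simultaneously consistent, in particular the tension between $p$ being close to $3$ from Lemma \ref{elementarylem} and the exponents suggested by $\rho|u|^p\in L^1$, is the delicate balancing act on which the whole argument hinges.
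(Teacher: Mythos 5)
Your plan reproduces the paper's argument essentially step for step: the same energy identity obtained by testing $\rho\partial_t\omega-L\omega=\rho F$ (with $F=-u\cdot\nabla u-v_t$) against $\omega_t$, the same use of the continuity equation to rewrite $\partial_t\rho^\gamma$ before invoking elliptic estimates for $L$, the same H\"older split of the convection term using Lemma~\ref{energyes} and the decomposition $\nabla u=\nabla\omega+\nabla v$, Gronwall for the first two bounds, and elliptic regularity $\|\nabla^2\omega\|_{L^2}\lesssim\|\rho\dot u\|_{L^2}$ followed by H\"older in time for the $1+\delta$ estimate. One small inaccuracy worth flagging when you implement it: the piece $L^{-1}\nabla(\rho^\gamma\nabla\cdot u)$ of $v_t$ is most naturally controlled in $L^6$, not $L^2$, so the paper pairs it with $\|\rho\|_{L^{3/2}}$ rather than estimating $\|v_t\|_{L^2}$ outright and then multiplying by $\|\rho\|_{L^\infty}^{1/2}$; the exponent bookkeeping in \eqref{Constraint} is tuned to this weighted splitting.
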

\begin{proof}
Note that
\begin{equation}\nonumber
\begin{cases}
\rho\partial_t\omega - L\omega = \rho F,\\[-4mm]\\
\omega(t, \cdot)|_{\partial\Omega} = 0,
\end{cases}
\end{equation}
where $$F = - u\cdot\nabla u - A\partial_tL^{-1}\nabla\rho^\gamma.$$
A straightforward energy estimate gives that
\begin{eqnarray}\label{6}
&&\frac{1}{2}\frac{d}{dt}\int \big(\mu|\nabla \omega|^2 + (\lambda + \mu)|\nabla\cdot \omega|^2\big) dx + \int \rho|\partial_t\omega|^2\\\nonumber
&&= \int \rho F\omega_tdx \leq \frac{1}{2}\int \rho|\partial_t\omega|^2dx + \frac{1}{2}\int \rho|F|^2dx.
\end{eqnarray}
Clearly, one has
\begin{eqnarray}\nonumber
\frac{1}{2}\int \rho|F|^2dx \leq \int \rho|u\cdot\nabla u|^2dx + A^2\int \rho|\partial_tL^{-1}\nabla\rho^\gamma|^2dx.
\end{eqnarray}

For $1 \leq \gamma \leq \frac{3}{2}$, using interpolation inequalities, \eqref{4} and \eqref{UPB}, we can first estimate  that
\begin{eqnarray}\label{7}
&&\int \rho|\partial_tL^{-1}\nabla\rho^\gamma|^2dx\\\nonumber
&&= \int \rho |L^{-1}\nabla\nabla\cdot(\rho^\gamma u) + (\gamma - 1)L^{-1}\nabla(\rho^\gamma\nabla\cdot u)|^2 dx\\\nonumber
&&\lesssim \|\rho\|_{L^\infty} \|\rho^\gamma u\|_{L^2}^2 + \|\rho\|_{L^{\frac{3}{2}}}\|\rho^\gamma\nabla\cdot u\|_{L^2}^2 \\\nonumber
&&\lesssim \|\sqrt{\rho}u\|_{L^2}^{2}\|\rho\|_{L^\infty}^{2\gamma} +
\|\rho\|_{L^\gamma}^{\frac{2\gamma}{3}}\|\rho\|_{L^\infty}^{\frac{4\gamma}{3} + 1}\|\nabla u\|_{L^2}^2\\\nonumber
&&\lesssim (T - t)^{- 2\kappa \gamma} +
(T - t)^{- \kappa(\frac{7\gamma}{3} + 1)}\\\nonumber
&&\quad +\
(T - t)^{- \kappa(\frac{4\gamma}{3} + 1)}\|\nabla \omega\|_{L^2}^2.
\end{eqnarray}
For $\gamma > \frac{3}{2}$, one simply has
\begin{eqnarray}\label{7-1}
&&\int \rho|\partial_tL^{-1}\nabla\rho^\gamma|^2dx\\\nonumber
&&\lesssim \|\sqrt{\rho}u\|_{L^2}^{2}\|\rho\|_{L^\infty}^{2\gamma} +
\|\rho\|_{L^{\frac{3}{2}}}\|\rho\|_{L^\infty}^{2\gamma}\|\nabla u\|_{L^2}^2\\\nonumber
&&\lesssim (T - t)^{- 2\kappa \gamma} +
(T - t)^{- 3\kappa \gamma} +
(T - t)^{- 2\kappa \gamma}\|\nabla \omega\|_{L^2}^2.
\end{eqnarray}

Next, the second tern in \eqref{6} can be estimated as follow
\begin{eqnarray}\nonumber
\int \rho|u\cdot\nabla u|^2dx &\lesssim& \|\rho\|_{L^\infty}^{1 - \frac{2}{p}}\|\rho|u|^p\|_{L^1}^{\frac{2}{p}}\|\nabla u\|_{L^{\frac{2p}{p - 2}}}^2\\\nonumber
&\lesssim& (T - t)^{- \kappa(1 - \frac{2}{p})}\|\nabla v\|_{L^{\frac{2p}{p - 2}}}^2 + (T - t)^{- \kappa(1 - \frac{2}{p})}\|\nabla\omega\|_{L^2}^{2 - \frac{6}{p}}\|\nabla\omega\|_{L^6}^{ \frac{6}{p}}\\\nonumber
&\lesssim& (T - t)^{- \kappa(1 - \frac{2}{p})- \kappa\gamma(1 + \frac{2}{p})} + (T - t)^{- \kappa(1 - \frac{2}{p})}\|\nabla\omega\|_{L^2}^{2 - \frac{6}{p}}\|\nabla^2\omega\|_{L^2}^{ \frac{6}{p}}.
\end{eqnarray}
Here one has used interpolation inequality, \eqref{4} and Lemma \ref{energyes}. On the other hand, one also has
\begin{eqnarray}\nonumber
\|\nabla^2\omega\|_{L^2} &\lesssim& \|\rho\omega_t\|_{L^2} + \|\rho F\|_{L^2}\\\nonumber
&\lesssim& (T - t)^{-\frac{\kappa}{2}}\|\sqrt{\rho}\omega_t\|_{L^2} + (T - t)^{-\frac{\kappa}{2}}\|\sqrt{\rho} F\|_{L^2}.
\end{eqnarray}
Hence,
\begin{eqnarray}\label{8}
&&\int \rho|u\cdot\nabla u|^2dx\lesssim  (T - t)^{- 2\kappa  - \kappa(\gamma - 1)(1 + \frac{2}{p})}\\\nonumber
&&\quad +\ (T - t)^{- \kappa(1 + \frac{1}{p})}\|\nabla\omega\|_{L^2}^{2 - \frac{6}{p}}(\|\sqrt{\rho}\omega_t\|_{L^2} + \|\sqrt{\rho} F\|_{L^2})^{ \frac{6}{p}}.
\end{eqnarray}

By \eqref{7}, \eqref{7-1}, \eqref{8} and using Young's inequality, we have
\begin{eqnarray}\label{8-1}
\int \rho|F|^2dx &\leq& \frac{1}{2}\|\sqrt{\rho}\omega_t\|_{L^2}^2 + (T - t)^{- \kappa(1 + \frac{1}{p})\frac{p}{p - 3}}\|\nabla \omega\|_{L^2}^2\\\nonumber
&&+\ \Big\{(T - t)^{- \kappa(\frac{7\gamma}{3} + 1)} + (T - t)^{- \kappa(\frac{4\gamma}{3} + 1)}\|\nabla \omega\|_{L^2}^2\Big\}1_{1 \leq \gamma \leq \frac{3}{2}}\\\nonumber
&&+\ \Big\{(T - t)^{- 3\gamma\kappa}  + (T - t)^{- 2\gamma\kappa}\|\nabla \omega\|_{L^2}^2\Big\}1_{\gamma > \frac{3}{2}}.
\end{eqnarray}
Inserting the above into \eqref{6}, and using Gronwall's inequality and the integrability condition
\begin{equation}\nonumber
\begin{cases}
\kappa(1 + \frac{1}{p})\frac{p}{p - 3} < 1,\\[-4mm]\\
\kappa\big(\frac{7\gamma}{3} + 1\big) < \frac{7\gamma + 3}{3(\gamma + 3)}  < 1,\quad  {\rm if}\ 1 \leq \gamma \leq \frac{3}{2},\\[-4mm]\\
3\gamma\kappa < 1,\quad {\rm if}\ \gamma > \frac{3}{2},
\end{cases}
\end{equation}
one has
$$\sup_{0 \leq t < T}\|\nabla \omega\|_{L^2} \lesssim 1,\quad \int_0^T\big\|\sqrt{\rho}\omega_t\|_{L^2}^2dt \lesssim 1. $$
Consequently, using the above estimates and by revisiting \eqref{8-1}, one has
\begin{eqnarray}\nonumber
\int_0^T\int \rho|F|^2dxdt  \lesssim 1.
\end{eqnarray}
which in turn gives that
\begin{eqnarray}\nonumber
\int_0^t\|\nabla^2\omega\|_{L^2}^{1 + \delta}dt &\lesssim& \int_0^t\|\rho\|_{L^\infty}^{\frac{1 + \delta}{2}}\big(\|\sqrt{\rho}\omega_t\|_{L^2} + \|\sqrt{\rho} F\|_{L^2}\big)^{{1 + \delta}}ds\\\nonumber
&\lesssim& 1,
\end{eqnarray}
where one has used the Cauchy inequality and the integrability condition
\begin{equation}\nonumber
\frac{1 + \delta}{1 - \delta}\kappa <
\begin{cases}
\frac{1 + \frac{\gamma + 2}{\gamma + 4}}{1 - \frac{\gamma + 2}{\gamma + 4}}\frac{1}{\gamma + 3} = 1,\quad  {\rm if}\ 1 \leq \gamma \leq \frac{3}{2},\\[-4mm]\\
\frac{1 + \frac{3\gamma - 1}{3\gamma + 1}}{1 - \frac{3\gamma - 1}{3\gamma + 1}}\frac{1}{3\gamma} = 1,\quad  {\rm if}\ \gamma > \frac{3}{2}
\end{cases}
\end{equation}
This finishes the proof of the lemma.
\end{proof}

\begin{rem}
It should be emphasized here that the conclusion on the space-time estimate may not be true for $\delta = 1$.
But the scaling heuristics indicates that such kind of estimate with any $\delta > 0$ is crucial (but $\kappa$ may be smaller if $\delta$ is smaller).
\end{rem}

A straightforward consequence of Lemma \ref{fluxlem}, together with Lemma \ref{lem3} gives that
\begin{cor}\label{cor}
Let $\delta$ be given in Lemma \ref{fluxlem} and all other assumptions be the same as in Theorem \ref{main}. Then it holds that
$$\|\nabla u(t, \cdot)\|_{L^2}^2 \lesssim 1 + (T - t)^{-\kappa\gamma},\quad \int_0^T\|\nabla u\|_{L^6}^{1 + \delta}dt \lesssim 1. $$
\end{cor}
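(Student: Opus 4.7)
The plan is to prove the corollary by the decomposition $u = \omega + v$ and the triangle inequality, so that the two stated estimates reduce to (i) facts already established for $\omega$ in Lemma \ref{fluxlem} and (ii) bounds on $v$ coming directly from the elliptic estimates of Lemma \ref{lem3}. No new PDE analysis is needed; the work is bookkeeping to check that the power-of-$(T-t)$ singularities appearing in the estimates of $v$ are integrable to the required exponents under the constraint \eqref{Constraint}.

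For the first inequality I would write $\|\nabla u\|_{L^2}\leq \|\nabla\omega\|_{L^2}+\|\nabla v\|_{L^2}$. The first term is uniformly bounded by Lemma \ref{fluxlem}. For the second, I apply the first line of \eqref{4} with $q=2$, which gives $\|\nabla v\|_{L^2}\lesssim (T-t)^{-\kappa\gamma/2}$ (after using $\rho\in L^\infty(L^\gamma)$ from the energy law to control $\|\rho^\gamma\|_{L^1}^{1/2}$). Squaring and adding produces $\|\nabla u\|_{L^2}^2\lesssim 1+(T-t)^{-\kappa\gamma}$, which is the first assertion.

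For the second inequality I again split $\|\nabla u\|_{L^6}\leq\|\nabla\omega\|_{L^6}+\|\nabla v\|_{L^6}$. The Sobolev embedding $\dot H^1\hookrightarrow L^6$ in $\mathbb{R}^3$ gives $\|\nabla\omega\|_{L^6}\lesssim\|\nabla^2\omega\|_{L^2}$, so the contribution from $\omega$ is controlled by the space-time bound $\int_0^T\|\nabla^2\omega\|_{L^2}^{1+\delta}\,dt\lesssim 1$ provided by Lemma \ref{fluxlem}. For $v$, the first line of \eqref{4} with $q=6$ yields $\|\nabla v\|_{L^6}\lesssim (T-t)^{-5\kappa\gamma/6}$, so it suffices to verify that
\begin{equation*}
\tfrac{5\kappa\gamma(1+\delta)}{6}<1
\end{equation*}
in each range of $\gamma$. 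When $1\leq\gamma\leq 3/2$ we have $1+\delta=\frac{2(\gamma+3)}{\gamma+4}$ and $\kappa<\frac{1}{\gamma+3}$, giving the exponent at most $\frac{5\gamma}{3(\gamma+4)}<1$; when $\gamma>3/2$ we have $1+\delta=\frac{6\gamma}{3\gamma+1}$ and $\kappa<\frac{1}{3\gamma}$, giving the exponent at most $\frac{5\gamma}{3(3\gamma+1)}<1$. Therefore $\int_0^T\|\nabla v\|_{L^6}^{1+\delta}\,dt<\infty$, which combined with the $\omega$ bound yields the second assertion.

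The only potentially delicate point is this integrability check at the endpoints of $\kappa$ and $\gamma$; once one fixes the correct values of $\delta$ from Lemma \ref{fluxlem} and the correct upper bound on $\kappa$ from \eqref{Constraint}, the verification is elementary, so no genuine obstacle is expected. The corollary will then follow immediately from these two bounds.
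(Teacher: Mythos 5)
Your proposal is correct and follows essentially the same route as the paper: the decomposition $u=\omega+v$, Lemma \ref{lem3} with $q=2$ and $q=6$ for $v$, and Lemma \ref{fluxlem} (via Sobolev embedding $\|\nabla\omega\|_{L^6}\lesssim\|\nabla^2\omega\|_{L^2}$) for $\omega$, with the same integrability check $\frac{5\kappa\gamma(1+\delta)}{6}<1$ verified in both $\gamma$-ranges. The paper's proof is terser but logically identical.
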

\begin{proof}
Indeed, one has
\begin{eqnarray}\nonumber
&&\|\nabla u(t, \cdot)\|_{L^2}^2 \lesssim \|\nabla v(t, \cdot)\|_{L^2}^2 + \|\nabla \omega(t, \cdot)\|_{L^2}^2\\\nonumber
&&\lesssim \|\rho^\gamma\|_{L^2}^2 + \|\nabla\omega\|_{L^2}^2 \lesssim (T - t)^{-\gamma\kappa} + \|\nabla\omega\|_{L^2}^2\\\nonumber
&&\quad\quad\quad \lesssim  1 + (T - t)^{-\gamma\kappa}
\end{eqnarray}
and
\begin{eqnarray}\nonumber
&&\int_0^T\|\nabla u\|_{L^6}^{1 + \delta}dt \lesssim \int_0^T\|\nabla v\|_{L^6}^{1 + \delta}dt + \int_0^T\|\nabla \omega\|_{L^6}^{1 + \delta}dt\\\nonumber
&&\lesssim \int_0^T\|\rho^\gamma\|_{L^6}^{1 + \delta}dt + \int_0^T\|\nabla \omega\|_{L^6}^{1 + \delta}dt \lesssim 1.
\end{eqnarray}
Here one has used the fact that $\frac{5\kappa\gamma(1 + \delta)}{6}  < 1$.
\end{proof}

\section{Further Estimate for the Effective Viscous Flux}

In this section we estimate the high order regularity of the quantity $\omega$. We follow the calculations in \cite{Hoff1}. So some similar calculations are omitted below.
Denote the material derivative of $f$ by
$$\dot{f} = \partial_tu + u\cdot\nabla u.$$
We have

\begin{lem}\label{lem4}
Let all assumptions in Theorem \ref{main} be true and $\delta$ be given in Lemma \ref{fluxlem}. Then one has
\begin{eqnarray}\label{53}
\int \rho|\dot{u}|^2dx + \int_0^t\|\nabla \dot{u}\|_{L^2}^2ds  \lesssim 1.
\end{eqnarray}
\end{lem}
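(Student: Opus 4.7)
The plan is to adapt Hoff's material-derivative technique \cite{Hoff1}: derive an evolution equation for $\dot u$ itself, test it against $\dot u$, and close via a Gronwall argument using the bounds already established in Lemma \ref{fluxlem} and Corollary \ref{cor}.

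\textbf{Equation for $\dot u$.} Applying $\partial_t$ to the $j$-th component of the momentum equation in \eqref{CNS} and adding $u^k\partial_k$ of the same equation, then using the continuity equation together with $\dot P = -\gamma P\,\nabla\cdot u$ and the commutator identity $[\partial_t+u\cdot\nabla,\partial_j]f = -\partial_j u^k\,\partial_k f$, one arrives at the transport--parabolic identity
\[
(\rho\dot u^j)_t + \partial_k(\rho u^k\dot u^j) = \mu\Delta\dot u^j + (\lambda+\mu)\partial_j\nabla\cdot\dot u + \mathcal R^j,
\]
where $\mathcal R^j$ collects commutator terms of the schematic form $\partial_k(\nabla u \cdot \nabla u)$ (from the viscous part) and $\partial_j(\gamma P\,\nabla\cdot u) + \partial_j u^k\,\partial_k P$ (from the pressure).

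\textbf{Energy identity and estimates.} Pairing with $\dot u^j$, integrating, and using the continuity-equation identity $\int[(\rho f)_t+\partial_k(\rho u^k f)]f\,dx=\tfrac12\tfrac{d}{dt}\int\rho f^2\,dx$, we obtain
\[
\tfrac12\tfrac{d}{dt}\int\rho|\dot u|^2\,dx + \int\bigl(\mu|\nabla\dot u|^2+(\lambda+\mu)|\nabla\cdot\dot u|^2\bigr)dx = \sum_i I_i(t),
\]
where each $I_i$ is a trilinear expression produced by $\int \mathcal R^j \dot u^j\,dx$ after integration by parts. Representative terms are $\int|\nabla u|^2|\nabla\dot u|\,dx$, $\int P|\nabla u||\nabla\dot u|\,dx$, and $\int\rho|\dot u|^2|\nabla u|\,dx$; H\"older's inequality, the interpolation $\|\nabla u\|_{L^3}\lesssim\|\nabla u\|_{L^2}^{1/2}\|\nabla u\|_{L^6}^{1/2}$, the pressure bound $\|P\|_{L^\infty}\lesssim (T-t)^{-\kappa\gamma}$ from \eqref{UPB} and Lemma \ref{lem3}, and the Sobolev embedding $\|\dot u\|_{L^6}\lesssim\|\nabla\dot u\|_{L^2}$ control these by weighted products of $\|\nabla u\|_{L^6}$, $\|\nabla u\|_{L^2}\lesssim(T-t)^{-\kappa\gamma/2}$, $(T-t)^{-\kappa\gamma}$, $\|\sqrt\rho\dot u\|_{L^2}$, and $\|\nabla\dot u\|_{L^2}$.

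\textbf{Main obstacle and Gronwall closure.} The crux is time-integrability of the Gronwall driver: a direct Cauchy--Schwarz absorption of $\|\nabla\dot u\|_{L^2}$ into the LHS produces a driver involving $\|\nabla u\|_{L^6}^{3}$, which is not in $L^1_t$, whereas Corollary \ref{cor} only furnishes $\|\nabla u\|_{L^6}^{1+\delta}\in L^1_t$ with $\delta<1$. The remedy is a careful Young's inequality split with exponents tuned to $1+\delta$ (valid since $\delta>1/2$ whenever $\gamma\geq1$), which reduces the Gronwall driver to $\|\nabla u\|_{L^6}^{1+\delta}\bigl(1+\|\sqrt\rho\dot u\|_{L^2}^2\bigr)$ plus sources of the form $(T-t)^{-\beta}$ coming from the pressure and from $\|\nabla u\|_{L^2}^2\lesssim (T-t)^{-\kappa\gamma}$. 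The verification that every arising exponent $\beta$ satisfies $\beta<1$ is precisely what the constraints in \eqref{Constraint} are designed for, in the same spirit as the closure of Lemma \ref{fluxlem}. Gronwall's inequality then delivers the stated bound.
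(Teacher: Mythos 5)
Your overall strategy — derive Hoff's evolution equation for $\rho\dot u$, test against $\dot u$, and close by Gronwall — is indeed the paper's approach (the paper cites \cite{Hoff1} and starts from $\rho\dot u + A\nabla\rho^\gamma = Lu$). You also correctly identify the crux: after the energy identity, the dangerous term is essentially $\|\nabla u\|_{L^4}^4\lesssim\|\nabla u\|_{L^2}\|\nabla u\|_{L^6}^3$, and since Corollary \ref{cor} only controls $\|\nabla u\|_{L^6}^{1+\delta}$ in $L^1_t$ with $\delta<1$, the driver is not obviously integrable.

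However, the ``remedy'' you invoke — ``a careful Young's inequality split with exponents tuned to $1+\delta$'' that magically turns $\|\nabla u\|_{L^6}^3$ into $\|\nabla u\|_{L^6}^{1+\delta}(1+\|\sqrt\rho\dot u\|_{L^2}^2)$ — is a genuine gap. Young/H\"older alone cannot produce a factor of $\|\sqrt\rho\dot u\|_{L^2}^2$ out of $\|\nabla u\|_{L^6}^3\|\nabla u\|_{L^2}$, since $\dot u$ appears nowhere in that expression; and a direct H\"older split requires $\|\nabla u\|_{L^6}^{3}$ to lie in some $L^p_t$ with $p\ge1$, which would need $3p\leq 1+\delta$ and is impossible. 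The missing structural step is the decomposition $u = v + \omega$ together with the elliptic identity $L\omega = \rho\dot u$: two of the three powers of $\|\nabla u\|_{L^6}$ are split as $(\|\nabla v\|_{L^6}+\|\nabla\omega\|_{L^6})^2$, then $\|\nabla\omega\|_{L^6}\lesssim\|\nabla^2\omega\|_{L^2}\lesssim\|\rho\dot u\|_{L^2}\lesssim (T-t)^{-\kappa/2}\|\sqrt\rho\dot u\|_{L^2}$ converts them into the Gronwall-compatible $\|\sqrt\rho\dot u\|_{L^2}^2$ multiplied by a weight, while the remaining single factor $\|\nabla u\|_{L^6}$ is paired with the time-weights by H\"older in $t$ against its $L^{1+\delta}_t$ bound. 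Without this $u=v+\omega$ step and the elliptic relation $L\omega=\rho\dot u$, your argument does not close.
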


\begin{proof}
Starting from
$$\rho\dot{u} + A\nabla \rho^\gamma = Lu,$$
one can derive that (see \cite{Hoff1})
\begin{eqnarray}\label{51}
&&\frac{d}{dt}\int \rho|\dot{u}|^2dx + \|\nabla \dot{u}\|_{L^2}^2 + \|\nabla \cdot \dot{u}\|_{L^2}^2\\\nonumber
&&\leq C\|\nabla u\|_{L^4}^4 + 2A \int\big[\partial_t\rho^\gamma \nabla\cdot\dot{u} + (u \cdot\nabla \dot{u})\cdot \nabla\rho^\gamma\big] dx.
\end{eqnarray}
First, note that
\begin{eqnarray}\nonumber
&&\int\partial_t\rho^\gamma \nabla\cdot\dot{u} + (u \cdot\nabla \dot{u})\cdot \nabla\rho^\gamma dx\\\nonumber
&&= - \int\big[\gamma\rho^\gamma(\nabla\cdot u)\nabla\cdot\dot{u} + (u\cdot\nabla\rho^\gamma) \nabla\cdot\dot{u} - (u \cdot\nabla \dot{u})\cdot \nabla\rho^\gamma\big] dx\\\nonumber
&&= - \int\big[(\gamma - 1)\rho^\gamma(\nabla\cdot u)\nabla\cdot\dot{u} + \rho^\gamma {\rm tr}(\nabla\dot{u}\nabla u)\big] dx\\\nonumber
&&\leq \|\rho^\gamma\|_{L^1}\|\rho\|_{L^\infty}^{3\gamma} + A^2\|\nabla u\|_{L^4}^4 + \frac{1}{4A}\|\nabla\dot{u}\|_{L^2}^2.
\end{eqnarray}
Inserting the above into \eqref{51} leads to
\begin{eqnarray}\label{52}
&&\frac{d}{dt}\int \rho|\dot{u}|^2dx + \|\nabla \dot{u}\|_{L^2}^2 + \|\nabla \cdot \dot{u}\|_{L^2}^2\\\nonumber
&&\leq C\|\nabla u\|_{L^4}^4 + C\|\rho^\gamma\|_{L^1}\|\rho\|_{L^\infty}^{3\gamma}\\\nonumber
&&\lesssim \|\nabla u\|_{L^4}^4 + (T - t)^{- 3\gamma\kappa}.
\end{eqnarray}
which, by integration with respect to time,  gives that
\begin{eqnarray}\label{52}
\int \rho|\dot{u}|^2dx + \int_0^t\|\nabla \dot{u}\|_{L^2}^2ds  \lesssim \int_0^t\|\nabla u\|_{L^4}^4ds + 1.
\end{eqnarray}

Now by the interpolation inequality and Corollary \ref{cor}, one has
\begin{eqnarray}\nonumber
\|\nabla u\|_{L^4}^4 &\leq& \|\nabla u\|_{L^2}\|\nabla u\|_{L^6}^3\\\nonumber
&\lesssim& \|\nabla u\|_{L^2}\|\nabla u\|_{L^6}\big(\|\nabla v\|_{L^6} + \|\nabla \omega\|_{L^6}\big)^2\\\nonumber
&\lesssim& \|\nabla u\|_{L^2}\|\nabla u\|_{L^6}\big(\|\rho^\gamma\|_{L^1}^{\frac{1}{6}}\|\rho\|_{L^\infty}^{\frac{5\gamma}{6}} + \|\nabla^2 \omega\|_{L^2}\big)^2\\\nonumber
&\lesssim& (T - t)^{- \frac{\gamma\kappa}{2} - \frac{5\gamma\kappa}{3}}\|\nabla u\|_{L^6} + (T - t)^{- \frac{\gamma\kappa}{2} - \kappa}\|\nabla u\|_{L^6}\|\sqrt{\rho}\dot{u}\|_{L^2}^2
\end{eqnarray}
Note that
\begin{eqnarray}\nonumber
&&(T - t)^{- \frac{\gamma\kappa}{2} - \frac{5\gamma\kappa}{3}}\|\nabla u\|_{L^6}\\\nonumber
&&\lesssim (T - t)^{- \frac{\gamma\kappa}{2} - \frac{5\gamma\kappa}{3}}\|\nabla v\|_{L^6} + (T - t)^{- \frac{\gamma\kappa}{2} - \frac{5\gamma\kappa}{3}}\|\nabla \omega\|_{L^6}\\\nonumber
&&\lesssim (T - t)^{- \frac{\gamma\kappa}{2} - \frac{5\gamma\kappa}{2}} + (T - t)^{- \frac{8\gamma\kappa}{3}}\|\sqrt{\rho}\dot{u}\|_{L^2}.
\end{eqnarray}
It is clear that $(T - t)^{- \frac{\gamma\kappa}{2} - \frac{5\gamma\kappa}{2}}$ is integrable in time.
Moreover, by the definition of $\delta$ and the constraint on $\kappa$, one also has
 $$\frac{\gamma\kappa}{2} + \kappa < \frac{\delta}{1 + \delta},\quad \frac{8\gamma\kappa}{3} < 1.$$
Hence, we have
$$\int_0^t(T - s)^{ - \frac{(\gamma + 2)\kappa}{2}}\|\nabla u\|_{L^6}ds \lesssim \Big(\int_0^T(T - t)^{ - \frac{\kappa(\gamma + 2)}{2}\frac{1 + \delta}{\delta}}dt\Big)^{\frac{\delta}{1 + \delta}} \lesssim 1.$$
Using \eqref{52} and Gronwall's inequality, one can finish the proof of the lemma.
\end{proof}

\section{Blowup Criterions and Proof of Theorem \ref{main}}

Consider the continuity equation. Applying $\nabla$ and then performing the energy estimate, one can  derive easily that
\begin{eqnarray}\nonumber
\frac{d}{dt}\|\nabla \rho\|_{L^q}^q \lesssim \|\nabla u\|_{L^\infty}\|\nabla \rho\|_{L^q}^q + \int \rho|\nabla^2u||\nabla \rho|^{q - 1}dx.
\end{eqnarray}
For $3 < q < 6$, one can estimate the last term in the above inequality by
\begin{eqnarray}\nonumber
&&\int \rho|\nabla^2u||\nabla \rho|^{q - 1}dx\\\nonumber
&&\lesssim \|\rho\|_{L^\infty}\big(\|\nabla^2v\|_{L^q} + \|\nabla^2\omega\|_{L^q}\big)\|\nabla \rho\|_{L^q}^{q - 1}\\\nonumber
&&\lesssim \|\rho\|_{L^\infty}^\gamma\|\nabla \rho\|_{L^q}^{q} + \|\rho\|_{L^\infty}\|\nabla^2\omega\|_{L^q}\|\nabla \rho\|_{L^q}^{q - 1},
\end{eqnarray}
where \eqref{5} has been used. Moreover, by Lemma \ref{lem1}, there holds
\begin{eqnarray}\nonumber
\|\nabla u\|_{L^\infty} &\lesssim& \|\nabla\omega\|_{L^\infty} + \|\nabla v\|_{L^\infty}\\\nonumber
&\lesssim& 1 + \|\nabla\omega\|_{L^\infty} + \|\rho\|_{L^\infty}^\gamma\ln\big(e + \|\rho\|_{L^\infty}^{\gamma - 1}\|\nabla\rho\|_{L^q}\big).
\end{eqnarray}
Combining the above three estimates together, we arrive at
\begin{eqnarray}\nonumber
\frac{d}{dt}\|\nabla \rho\|_{L^q} &\lesssim& \|\rho\|_{L^\infty}\|\nabla^2\omega\|_{L^q} + \big(1 + \|\nabla \omega\|_{L^\infty} + \|\rho\|_{L^\infty}^\gamma\ln(e + \|\rho\|_{L^\infty}^{\gamma - 1})\big)\\\nonumber
&& \times\ \|\nabla \rho\|_{L^q}\ln\big(e + \|\nabla\rho\|_{L^q}\big)\\\nonumber
&\lesssim& \big(1 + \|\nabla \omega\|_{L^\infty} + \|\rho\|_{L^\infty}\|\nabla^2\omega\|_{L^q} + \|\rho\|_{L^\infty}^\gamma\ln(e + \|\rho\|_{L^\infty})\big)\\\nonumber
&&\times\ \|\nabla \rho\|_{L^q}\ln\big(e + \|\nabla\rho\|_{L^q}\big).
\end{eqnarray}
Then the Gronwall's inequality shows that
$$\|\nabla \rho\|_{L^q} \lesssim 1,\quad 0 \leq t \leq T$$
provided that
\begin{equation}\label{21}
\begin{cases}
\int_0^T\|\rho\|_{L^\infty}^\gamma \ln(e + \|\rho\|_{L^\infty}) dt < \infty,\\[-4mm]\\
\int_0^T\big(\|\nabla \omega\|_{L^\infty} + \|\rho\|_{L^\infty}\|\nabla^2\omega\|_{L^q}\big) dt < \infty.
\end{cases}
\end{equation}

Clearly, the first one in \eqref{21} holds under \eqref{UPB}.
It remains to check the second inequality in \eqref{21}. By interpolation and using Lemma \ref{fluxlem}, one has
\begin{eqnarray}\nonumber
&&\|\nabla \omega\|_{L^\infty} + \|\rho\|_{L^\infty}\|\nabla^2\omega\|_{L^q}\\\nonumber
&& \lesssim \|\nabla\omega\|_{L^2}^{\frac{2q - 6}{5q - 6}}\|\nabla^2\omega\|_{L^q}^{\frac{3q}{5q - 6}} + \|\rho\|_{L^\infty}\|\nabla^2\omega\|_{L^q}\\\nonumber
&&\lesssim (T - t)^{- \kappa}\|\nabla^2\omega\|_{L^q}.
\end{eqnarray}
Hence, using
$$\rho\dot{u} = L\omega,$$
one can get
\begin{eqnarray}\nonumber
&&\int_0^T\big(\|\nabla \omega\|_{L^\infty} + \|\rho\|_{L^\infty}\|\nabla^2\omega\|_{L^q}\big) dt\\\nonumber
&&\lesssim \int_0^T (T - t)^{- \kappa}\|\rho\dot{u}\|_{L^q}ds\\\nonumber
&&\lesssim \int_0^T (T - t)^{- \kappa}\|\rho\|_{L^{\frac{6q}{6 - q}}}\|\dot{u}\|_{L^6}ds.
\end{eqnarray}
For $\gamma \leq \frac{6q}{6 - q}$, then one can use the interpolation inequality to derive that
\begin{eqnarray}\nonumber
&&\int_0^T\big(\|\nabla \omega\|_{L^\infty} + \|\rho\|_{L^\infty}\|\nabla^2\omega\|_{L^q}\big) dt\\\nonumber
&&\lesssim \int_0^T (T - t)^{- \kappa(2 - \frac{6 - q}{6q}\gamma)}\|\rho^\gamma\|_{L^1}^{\frac{6 - q}{6q}}\|\nabla\dot{u}\|_{L^2}ds\\\nonumber
&&\lesssim \int_0^T\|\nabla\dot{u}\|_{L^2}^2dt + \int_0^T(T - t)^{- \kappa(4 - \frac{6 - q}{3q}\gamma)}dt.
\end{eqnarray}
For $\gamma > \frac{6q}{6 - q}$, it holds that
\begin{eqnarray}\nonumber
&&\int_0^T\big(\|\nabla \omega\|_{L^\infty} + \|\rho\|_{L^\infty}\|\nabla^2\omega\|_{L^q}\big) dt\\\nonumber
&&\lesssim \int_0^T\|\nabla\dot{u}\|_{L^2}^2dt + \int_0^T(T - t)^{- 2\kappa}dt.
\end{eqnarray}
Using \eqref{53} in Lemma \ref{lem4} and noting that $\kappa(4 - \frac{6 - q}{3q}\gamma) < 1$, one has
\begin{eqnarray}\nonumber
\int_0^T\big(\|\nabla \omega\|_{L^\infty} + \|\rho\|_{L^\infty}\|\nabla^2\omega\|_{L^q}\big) dt \lesssim 1.
\end{eqnarray}

Now we have proved that
$$\|\nabla \rho\|_{L^q} \lesssim 1,\quad 0 \leq t \leq T.$$
By Sobolev imbedding, one has
$$\rho \lesssim 1, \quad 0 \leq t < T.$$
Now the proof of Theorem \ref{main} follows from the known blowup criteria, see \cite{HLX2, SWZ}. Alternatively, one can also estimate that
\begin{eqnarray}\nonumber
\|\nabla u(t, \cdot)\|_{L^2}^2 &\lesssim& \|\nabla v(t, \cdot)\|_{L^2}^2 + \|\nabla \omega(t, \cdot)\|_{L^2}^2\\\nonumber
&\lesssim& \|\rho^\gamma\|_{L^2}^2 + \|\nabla\omega\|_{L^2}^2 \lesssim 1
\end{eqnarray}
for all $0 \leq t < T$. Then using non-blowup criterion \eqref{criterion}, one finishes the proof of Theorem \ref{main}.

\section{Construction of the Explicit Blowup Solution}

Now we construct the explicit blowup solution and prove Theorem \ref{Exam}. The key here is to find an explicit solution to an over-determined nonlinear system of equations which serves as  the profile of a self-similar solution to the compressible Euler and Navier-Stokes equations.

Let us derive the system governing the profile $(\Theta, V)$. Denote
$$y = \frac{x}{(T - t)^{\frac{\gamma + 1}{2\gamma}}}.$$ Due to \eqref{SSS}, it is easy to compute that
$$\partial_t\rho(t, x) = \frac{1}{(T - t)^{\frac{1}{\gamma} + 1}}\big[\frac{1}{\gamma}\Theta(y) + \frac{\gamma + 1}{2\gamma}y\cdot\nabla_y\Theta(y)\big]$$
and
$$\rho\partial_tu (t, x) = \frac{1}{(T - t)^{\frac{\gamma + 1}{2\gamma} + 1}}\big[\frac{\gamma - 1}{2\gamma}V(y) + \frac{\gamma + 1}{2\gamma}y\cdot\nabla_y V(y)\big]\Theta(y).$$
Moreover, one can also compute that
$$\nabla_x\cdot(\rho(t, x) u(t, x)) = \frac{1}{(T - t)^{\frac{1}{\gamma} + 1}}\nabla_y \cdot (\Theta(y) V(y))$$
and
$$\rho u\cdot\nabla_x u + A\nabla_x\rho^\gamma = \frac{1}{(T - t)^{\frac{\gamma + 1}{2\gamma} + 1}}\big[\Theta V\cdot\nabla_y V(y) + A\nabla_y\Theta^\gamma(y)\big],$$
$$\mu\Delta_xu + (\lambda + \mu)\nabla_x\nabla_x\cdot u = \frac{1}{(T - t)^{\frac{\gamma + 1}{2\gamma} + 1}}\big(\mu\Delta_yV(y) + (\lambda + \mu)\nabla_y\nabla_y\cdot V(y)\big).$$
Consequently, we obtain the systems governing the profile $(\Theta, V)$
\begin{equation}\label{SCNS}
\begin{cases}
\frac{1}{\gamma}\Theta + \frac{\gamma + 1}{2\gamma}y\cdot\nabla \Theta + \nabla\cdot (\Theta V) = 0,\\[-4mm]\\
\big(\frac{\gamma - 1}{2\gamma}V + \frac{\gamma + 1}{2\gamma}y\cdot\nabla V +  V\cdot\nabla V\big)\Theta + A\nabla \Theta^\gamma
  = \mu\Delta V + (\lambda + \mu)\nabla\nabla\cdot V.
\end{cases}
\end{equation}

Next, let us construct a special solution to \eqref{SCNS}. Here we hope to ignore the viscosity terms in \eqref{SCNS}.  Hence, we search for solutions to \eqref{SCNS} with
$$V(y) = \beta y$$
for some constant $\beta$ which will be determined later. Using this linear velocity field $V$, we reduce \eqref{SCNS} to
\begin{equation}\label{SCE}
\begin{cases}
\big(\frac{1}{\gamma} + n\beta\big)\Theta + \big(\frac{\gamma + 1}{2\gamma} + \beta\big)y\cdot\nabla \Theta = 0,\\[-4mm]\\
A\nabla \Theta^\gamma = - (1 + \beta)\beta y\Theta,\\[-4mm]\\
  V(y) = \beta y.
\end{cases}
\end{equation}
Clearly, system \eqref{SCE} is over-determined. Fortunately, by choosing $\beta$ so that
$$\beta = - \frac{2}{n(\gamma - 1) + 2},$$
one has a solution to \eqref{SCE}, which reads
\begin{equation}\label{Solu}
\begin{cases}
\Theta(y) = C_n^{\frac{1}{\gamma - 1}}r^{\frac{2}{\gamma - 1}},\\[-4mm]\\
  V(y) = - \frac{2y}{n(\gamma - 1) + 2},
\end{cases}
\end{equation}
where the constants $C_n$ are given in \eqref{Const}.
It is clear that the example in Theorem \ref{Exam} can be obtained by inserting \eqref{Solu} into \eqref{SSS}.

\begin{rem}
It is interesting to construct regular solutions $(\Theta, V)$ to system \eqref{SCNS} with
\begin{equation}\nonumber
\nabla V \in H^s(s \geq 2),\quad \Theta \in L^p \cap C^\infty.
\end{equation}
A solution in this class would yield a family of physically more reasonable solutions which blow up in any given finite time. Unfortunately, we are not able to settle this problem down at present. However, using the first equation in \eqref{SCNS}, it is easy to derive the following \textit{a priori} estimate
\begin{eqnarray}\nonumber
\Big|\frac{2p - 3(\gamma + 1)}{2p\gamma}\Big|\|\Theta\|_{L^p}^p \leq \frac{p - 1}{p}\|\Theta\|_{L^p}^p\|\nabla\cdot V\|_{L^\infty}.
\end{eqnarray}
This simply implies that profiles $(\Theta, V)$ with sufficiently small $\|\Theta\|_{L^p}$ and $\|\nabla V\|_{H^2}$ do not exist for any $p \neq \frac{3(\gamma + 1)}{2}$. Hence, there is no self-similar blowup solutions to the compressible Navier-Stokes equations \eqref{CNS} with sufficiently small initial data $\int\rho_0|u_0|^2$ ($\lesssim \|\rho_0\|_{L^{\frac{3}{2}}}\|\nabla u_0\|_{L^2}^2$), which agrees with the result in \cite{HLX}.
\end{rem}

%----------------------------------------------------------------------------acknowledgement
\section*{Acknowledgement}
Zhen Lei was in part supported by NSFC (grant No. 11421061 and 11222107),
National Support Program for Young Top-Notch Talents, and SGST 09DZ2272900.
Zhouping Xin was partially supported by Zheng Ge Ru Foundation, Hong Kong RGC Earmarked
Research Grants CUHK-14305315 and CUHK4048/13P, NSFC/RGC Joint Research
Scheme Grant N-CUHK 443-14, and a Focus Area Grant from the Chinese University of Hong
Kong.

\end{document}